\newtheorem{theorem}{Theorem}[section]
\newtheorem{corollary}[theorem]{Corollary}
\newtheorem{lemma}[theorem]{Lemma}
\theoremstyle{remark}
\newtheorem{remark}[theorem]{Remark}
\numberwithin{equation}{section}
\begin{document}
\title[The foliated structure of contact metric $(\kappa,\mu)$-spaces]{The foliated structure of contact metric $(\kappa,\mu)$-spaces}

\author[B. Cappelletti Montano]{Beniamino Cappelletti Montano}
\address{Dipartimento di Matematica,
Universit\`{a} degli Studi di Bari, Via E. Orabona 4, 70125 Bari,
Italy} \email{cappelletti@dm.uniba.it}

\subjclass[2000]{53C12, 53C15, 53C25, 53C26, 57R30}

\keywords{Contact metric manifold, $(\kappa ,\mu )$-nullity
condition, Sasakian, Legendre, bi-Legendrian, foliation}

\begin{abstract}
In this note we study the foliated structure of a contact metric
$(\kappa,\mu)$-space. In particular, using the theory of Legendre
foliations, we give a geometric interpretation of the Boeckx's
classification of contact metric $(\kappa,\mu)$-spaces and we find
necessary conditions for a contact manifold to admit a compatible
contact metric $(\kappa,\mu)$-structure. Finally we prove that any
contact metric $(\kappa,\mu)$-space $M$ whose Boeckx invariant $I_M$
is different from $\pm 1$ admits a compatible Sasakian or
Tanaka-Webster parallel structure according to the circumstance that
$|I_M|>1$ or $|I_M|<1$, respectively.
\end{abstract}

\maketitle

\section{Introduction}
A contact metric manifold $(M,\varphi,\xi,\eta,g)$ is called a
contact metric $(\kappa,\mu)$-manifold if the Reeb vector field
$\xi$ belongs to the $\left(\kappa,\mu\right)$-nullity distribution,
i.e. the curvature tensor field satisfies, for all vector fields $X$
and $Y$ on $M$,
\begin{equation}\label{definizione}
R_{X
Y}\xi=\kappa\left(\eta\left(Y\right)X-\eta\left(X\right)Y\right)+\mu\left(\eta\left(Y\right)hX-\eta\left(X\right)hY\right),
\end{equation}
for some real numbers $\kappa$ and $\mu$; here $2h$ denotes the
Lie derivative of $\varphi$ in the direction of $\xi$. This
definition was introduced by Blair,  Kouforgiorgos and Papantoniou
(\cite{BKP-95}) and can be regarded as a generalization both of
the Sasakian condition $R_{X
Y}\xi=\eta\left(Y\right)X-\eta\left(X\right)Y$ and of those
contact metric manifolds verifying $R_{X Y}\xi=0$ which were
studied by D. E. Blair in \cite{blair-1}.

Lately, contact metric $\left(\kappa,\mu\right)$-manifolds have
attracted the attention of many authors and various recent papers
have appeared on this topic (e.g. \cite{Boeckx-08}, \cite{ghosh},
\cite{koufogiorgos}). In fact there are many motivations for
studying $\left(\kappa,\mu\right)$-manifolds: the first is that, in
the non-Sasakian case (that is for $\kappa\neq 1$), the condition
\eqref{definizione} determines the curvature completely; moreover,
while the values of $\kappa$ and $\mu$ may change, the form of
\eqref{definizione} is invariant under $\mathcal D$-homothetic
deformations; finally, a complete classification of contact metric
$\left(\kappa,\mu\right)$-manifolds is known (\cite{Boeckx-00}) and
there are non-trivial examples of such manifolds, the most important
being the unit tangent sphere bundle of a Riemannian manifold of
constant sectional curvature with its usual contact metric
structure.

One of the peculiarities of contact metric $(\kappa,\mu)$-manifolds
is that they give rise to three mutually orthogonal involutive
distributions ${\mathcal D}({\lambda})$, ${\mathcal D}({-\lambda})$
and $\mathbb{R}\xi={\mathcal D}(0)$, corresponding to the
eigenspaces $\lambda$, $-\lambda$ and $0$ of the operator $h$, where
$\lambda=\sqrt{1-\kappa}$. In particular ${\mathcal D}({\lambda})$
and ${\mathcal D}({-\lambda})$ define two transverse Legendre
foliations of $M$ so that any contact metric $(\kappa,\mu)$-manifold
is canonically endowed with a bi-Legendrian structure.  The study of
the bi-Legendrian structure of a contact metric
$(\kappa,\mu)$-manifold was initiated in \cite{Mino-Luigia-07},
where the following characterization of contact metric
$(\kappa,\mu)$-manifolds in terms of Legendre foliations was proven.

\begin{theorem} [\cite{Mino-Luigia-07}]\label{principale0}
Let $(M,\varphi,\xi,\eta,g)$ be a non-Sasakian contact metric
manifold.  Then $M$ is a contact metric
$\left(\kappa,\mu\right)$-manifold if and only if it admits two
mutually orthogonal Legendre distributions $L$ and $Q$ and a
unique linear connection $\bar{\nabla}$ satisfying the following
properties:
\begin{enumerate}
\item[{\rm (i)}] $\bar{\nabla}L\subset L$,\quad $\bar{\nabla} Q\subset Q$,
\item[{\rm (ii)}] $\bar{\nabla}\eta=0$,\quad  $\bar{\nabla}d\eta=0$,\quad  $\bar{\nabla}g=0$,\quad  $\bar{\nabla}\varphi=0$,\quad  $\bar{\nabla}h=0$,
\item[{\rm (iii)}] $\bar{T}\left(X,Y\right)=2d\eta\left(X,Y\right){\xi}$ \quad
for  all $X,Y\in\Gamma({\mathcal{D}})$,\\
$\bar{T}(X,\xi)=[\xi,X_{L}]_{Q}+[\xi,X_{Q}]_{L}$ \quad  for all
$X\in\Gamma(TM)$,
\end{enumerate}
where $\bar{T}$ denotes the torsion tensor field of $\bar{\nabla}$
and $X_L$ and $X_Q$ are, respectively, the projections of $X$ onto
the subbundles $L$ and $Q$ of $TM$. Furthermore, $L$ and $Q$ are
integrable and coincide with the eigenspaces ${\mathcal D}(\lambda)$
and ${\mathcal D}(-\lambda)$ of the operator $h$, and $\bar\nabla$
coincides in fact with the bi-Legendrian connection $\nabla^{bl}$
associated to the bi-Legendrian structure $(L,Q)$ (cf.
\cite{Mino-05}, \cite{Mino-07}).
\end{theorem}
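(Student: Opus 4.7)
The plan is to treat the two implications separately, constructing the connection $\bar\nabla$ explicitly in the forward direction and reconstructing the $(\kappa,\mu)$-identity from the axioms in the reverse direction. For the forward implication, start from a non-Sasakian contact metric $(\kappa,\mu)$-manifold, so that $\lambda=\sqrt{1-\kappa}>0$ and $h$ has eigenvalues $0,\pm\lambda$. Set $L:=\mathcal{D}(\lambda)$ and $Q:=\mathcal{D}(-\lambda)$: by \cite{BKP-95} these are mutually orthogonal, integrable Legendre distributions. Define $\bar\nabla$ to be the bi-Legendrian connection $\nabla^{bl}$ of $(L,Q)$. By the construction of $\nabla^{bl}$ in \cite{Mino-05,Mino-07}, property (i), both torsion identities in (iii), and the identities $\bar\nabla\eta=0$ and $\bar\nabla d\eta=0$ are automatic.

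The substantive work in the forward direction is verifying $\bar\nabla h=0$, $\bar\nabla\varphi=0$ and $\bar\nabla g=0$. The first is almost tautological: $\bar\nabla$ preserves each summand of $TM=L\oplus Q\oplus\mathbb{R}\xi$, and $h$ acts by the constants $\lambda$, $-\lambda$, $0$ on these summands, so $\bar\nabla$ commutes with $h$. For $\bar\nabla\varphi=0$ the key point is that $\varphi$ exchanges $L$ and $Q$ (it anticommutes with $h$), so the verification of $(\bar\nabla_X\varphi)Y=0$ on sections of each summand reduces to an identity relating $\nabla^{bl}$ to the Levi-Civita connection $\nabla$, which is resolved using the formulas for $\nabla\varphi$ and $\nabla\xi$ available in \cite{BKP-95}. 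For $\bar\nabla g=0$, orthogonality of the splitting localises the check to pairs of sections inside a single summand, where the $(\kappa,\mu)$-identity \eqref{definizione} is exactly what is needed to close the calculation.

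For the reverse implication, the first step is integrability of $L$ and $Q$: if $X,Y\in\Gamma(L)$, then (iii) gives $[X,Y]=\bar\nabla_X Y-\bar\nabla_Y X-2d\eta(X,Y)\xi\in L$, since $L$ is Legendre and hence $d\eta(X,Y)=0$; the same argument handles $Q$. Next, $\bar\nabla h=0$ together with $\bar\nabla L\subset L$ and $\bar\nabla Q\subset Q$ forces $L$ and $Q$ to be $h$-invariant subbundles with constant eigenvalues $\pm\lambda$ across the connected manifold, and one sets $\kappa:=1-\lambda^2$. The final and most delicate step is to recover \eqref{definizione} itself: writing $\bar\nabla=\nabla+B$ with $\nabla$ the Levi-Civita connection, the torsion hypothesis and the parallelism conditions in (ii) determine the difference tensor $B$ explicitly in terms of $h$, $\varphi$ and $\eta$, and the expansion of $\bar R_{XY}\xi-R_{XY}\xi$ using this $B$, combined with $\bar\nabla\xi=0$ (a consequence of $\bar\nabla\eta=0$ and $\bar\nabla g=0$), yields $R_{XY}\xi$ in the form \eqref{definizione}, with $\mu$ read off from the second torsion identity. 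The main obstacle is precisely this last computation: tracking the components of $B$ in the splitting $L\oplus Q\oplus\mathbb{R}\xi$ and then following their interaction with $\varphi$ and $h$ inside the curvature tensor is the only part of the argument that is not formal. Uniqueness of $\bar\nabla$ then follows from the standard Koszul-type formula characterising bi-Legendrian connections, once $L$ and $Q$ have been identified.
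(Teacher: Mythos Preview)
The paper does not contain a proof of this theorem. Theorem~\ref{principale0} is quoted from \cite{Mino-Luigia-07} and is used here only as a tool (in the proofs of Theorem~\ref{legendre1}, Theorem~\ref{legendre2} and Corollary~\ref{sasaki1}); no argument for it appears in the present text. Consequently there is nothing in this paper against which your proposal can be compared.

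That said, a few remarks on your sketch itself. The forward direction is sound in outline: once $L=\mathcal D(\lambda)$ and $Q=\mathcal D(-\lambda)$ are fixed, taking $\bar\nabla=\nabla^{bl}$ reduces everything to the parallelism of $g$, $\varphi$ and $h$, and your description of how these follow is reasonable (Lemma~\ref{lemmarocky} in this paper packages the equivalence $\nabla^{bl}g=0\Leftrightarrow\nabla^{bl}\varphi=0$ that you invoke). In the reverse direction, however, the step ``$\bar\nabla h=0$ together with $\bar\nabla L\subset L$, $\bar\nabla Q\subset Q$ forces $L$ and $Q$ to be $h$-invariant with constant eigenvalues $\pm\lambda$'' is not justified as stated: parallelism of $h$ and preservation of $L$ do not by themselves imply $hL\subset L$. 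The clean route is first to observe that orthogonality of $L$ and $Q$ forces $Q=\varphi L$, then to invoke the characterisation in Lemma~\ref{lemmarocky} (namely, $\nabla^{bl}g=0$ implies that $h$ preserves $L$ and $Q$) once you have identified $\bar\nabla$ with $\nabla^{bl}$ via the uniqueness in Theorem~\ref{biconnection}. Only after the $(\kappa,\mu)$-identity has been established does Theorem~\ref{teoremagreci} guarantee that $h$ has exactly the three eigenvalues $0,\pm\lambda$, so that $L$ and $Q$ really are the full eigenspaces; attempting to extract the eigenvalue structure beforehand, as your sketch does, risks circularity.
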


Using the approach of Theorem \ref{principale0}, in
\cite{Mino-Luigia-Tripathi-08} the authors recently were able to
prove the strong result that any invariant submanifold of a
non-Sasakian contact metric $(\kappa,\mu)$-space is totally
geodesic.

In this paper the study of the foliated structure of a contact
metric $(\kappa,\mu)$-space is carried on. \ We start with the
following question, which generalizes the well-known problem of
finding conditions ensuring the existence of Sasakian structures
compatible with a given contact form: let $(M,\eta)$ be a contact
manifold; then does $(M,\eta)$ admit a compatible contact metric
$(\kappa,\mu)$-structure? As a matter of fact, the answer to this
question involves the foliated nature of contact metric
$(\kappa,\mu)$-spaces. In particular, we find necessary
conditions, in terms of bi-Legendrian structures, for a contact
manifold $(M,\eta)$ to admit a compatible contact metric
$(\kappa,\mu)$-structure (cf. Theorem \ref{legendre1} and Theorem
\ref{legendre2}).  Moreover, we interpret the Boeckx
classification \cite{Boeckx-00} of contact metric
$(\kappa,\mu)$-manifolds in terms of the Pang classification
\cite{pang} of Legendre foliations, clarifying the geometric
meaning of the invariant
\begin{equation*}
I_M=\frac{1-\frac{\mu}{2}}{\sqrt{1-\kappa}}
\end{equation*}
which was defined by Boeckx in \cite{Boeckx-00} in a rather obscure
way.

It follows that contact metric $(\kappa,\mu)$-spaces divide into $5$
main classes, according to the behavior of each Legendre foliation
${\mathcal D}(\lambda)$ and ${\mathcal D}(-\lambda)$. We prove that
those classes of contact metric $(\kappa,\mu)$-manifolds such that
$|I_M|\neq 1$ admit a family $(\varphi_{a,b},\xi,\eta,g_{a,b})$ of
compatible contact metric $(\kappa_{a,b},\mu_{a,b})$-structures,
where the constants $\kappa_{a,b}$, $\mu_{a,b}$ are parameterized by
the real numbers $a$ and $b$ satisfying the relation
$ab=\left(2-\mu\right)^2-4\left(1-\kappa\right)$, namely,
\begin{equation*}
\kappa_{a,b}=1-\frac{(a-b)^2}{16}, \ \ \ \mu_{a,b}=2-\frac{a+b}{2}.
\end{equation*}
In particular, we show that, in the case $|I_M|>1$, choosing
$a=b$, the above contact metric
$(\kappa_{a,b},\mu_{a,b})$-structures are in fact Sasakian. Thus,
rather surprisingly, it follows that any contact metric
$(\kappa,\mu)$-manifold such that $|I_M|>1$ admits a compatible
Sasakian structure and hence, under the assumption of compactness,
for each $1\leq p\leq 2n$, the $p$-th Betti number of $M$ is even,
where $2n+1$ is the dimension of the manifold. At the knowledge of
the author, the last one is the first topological obstruction for
contact metric $(\kappa,\mu)$-manifolds known at the moment.
Whereas, if $|I_M|<1$, choosing $a=-b$, we obtain a family of
Tanaka-Webster parallel structures, i.e. contact metric structures
whose Tanaka-Webster connection preserves the Tanaka-Webster
torsion and the Tanaka-Webster curvature (\cite{Boeckx-08}).

Finally, we show that those contact metric manifolds with $|I_M|=1$
admit a family $(\varphi_{c},\xi,\eta,g_{c})$ of compatible contact
metric $(\kappa_c,\mu_c)$-structures, with
\begin{equation*}
\kappa_{c}=1-\frac{c^2}{16}, \ \ \
\mu_{c}=2\left(1-\frac{c}{4}\right),
\end{equation*}
where $c$ varies in the interval $(0,4]$ in the case $I_M=1$ and
$[-4,0)$ in the case $I_M=-1$.

\section{Preliminaries}

\subsection{Contact geometry}

A \emph{contact manifold} is a $(2n+1)$-dimensional smooth manifold
$M$ which carries a $1$-form $\eta$, called \emph{contact form},
satisfying $\eta\wedge\left(d\eta\right)^n\neq 0$ everywhere on $M$.
It is well known that given $\eta$ there exists a unique vector
field $\xi$, called \emph{Reeb vector field}, such that
$i_{\xi}\eta=1$ and $i_{\xi}d\eta=0$. In the sequel we will denote
by $\mathcal D$ the $2n$-dimensional distribution defined by
$\ker\left(\eta\right)$, called the \emph{contact distribution}. It
is easy to see that the Reeb vector field is an infinitesimal
automorphism with respect to the contact distribution and  the
tangent bundle of $M$ splits as the direct sum $TM=\mathcal
D\oplus\mathbb{R}\xi$.

It is well known that any contact manifold $(M,\eta)$ admits a
Riemannian metric $g$ and a $(1,1)$-tensor field $\varphi$ such
that
\begin{equation}\label{acca}
\varphi ^{2}=-I+\eta \otimes \xi, \ \ d\eta \left(
X,Y\right)=g\left( X,\varphi Y\right), \ \ g(\varphi X,\varphi
Y)=g(X,Y)-\eta (X)\eta (Y)
\end{equation}
for all $X,Y\in \Gamma \left( TM\right)$, from which it follows that
$\varphi\xi=0$, $\eta\circ\varphi=0$ and $\eta=g(\cdot,\xi)$. The
structure $\left(\varphi, \xi, \eta, g\right)$ is called a
\emph{contact metric structure} and the manifold $M$ endowed with
such a structure is said to be a {\em contact metric manifold}.  In
a contact metric manifold $M$, the $\left( 1,1\right) $-tensor field
$h:=\frac 12 \mathcal L_\xi \varphi$ is symmetric and satisfies
\begin{equation}
h\xi =0,\;\; \eta\circ h=0,\;\; h\varphi +\varphi h=0,\;\;\nabla\xi
=-\varphi -\varphi h,\;\;  {\rm tr}(h)={\rm tr}(\varphi h)=0,
\label{eq-contact-7}
\end{equation}
where $\nabla$ is the Levi Civita connection of $(M,g)$. The tensor
field $h$ vanishes identically if and only if the Reeb vector field
is Killing, and in this case the contact metric manifold in question
is said to be \emph{K-contact}.

Moreover, in any contact metric manifold one can consider the tensor
field $N_{\varphi}$, defined by
\begin{equation*}
N_{\varphi}(X,Y):=\varphi^2[X,Y]+[\varphi X,\varphi
Y]-\varphi[\varphi X,Y]-\varphi[X,\varphi Y]+2d\eta(X,Y)\xi,
\end{equation*}
for all $X,Y\in\Gamma(TM)$. The tensor field $N_\varphi$ satisfies
the following formula, which will turn out very useful in the
sequel,
\begin{equation}\label{formulenijenhuis1}
\varphi N_\varphi(X,Y)+N_\varphi(\varphi X,Y)=2\eta(X)h Y,
\end{equation}
for all $X,Y\in\Gamma(TM)$, from which, in particular, it follows
that
\begin{equation}\label{formulenijenhuis2}
\eta(N_{\varphi}(\varphi X,Y))=0.
\end{equation}
A contact metric manifold such that $N_\varphi$ vanishes identically
is said to be \emph{Sasakian}. In terms of the covariant derivative
the Sasakian condition can be expressed by the following formula
\begin{equation}\label{condizionesasaki}
(\nabla_{X}\varphi)Y=g(X,Y)\xi-\eta(Y)X,
\end{equation}
whereas, in term of the curvature tensor field, the Sasakian
condition is
\begin{equation*}
R_{XY}\xi=\eta(Y)X-\eta(X)Y.
\end{equation*}
Any Sasakian manifold is \emph{K}-contact and in dimension $3$ also
the converse holds (see \cite{Blair-02} for more details).

A recent generalization of Sasakian manifolds is the notion of
\emph{contact metric $(\kappa,\mu)$-manifolds} (\cite{BKP-95}). Let
$(M,\varphi,\xi,\eta,g)$ be a contact metric manifold. If  the
curvature tensor field of the Levi Civita connection satisfies
\begin{equation}\label{eq-km}
R_{X
Y}\xi=\kappa\left(\eta\left(Y\right)X-\eta\left(X\right)Y\right)+\mu\left(\eta\left(Y\right)hX-\eta\left(X\right)hY\right),
\end{equation}
for some $\kappa,\mu\in\mathbb{R}$, we say that
$(M,\varphi,\xi,\eta,g)$  is a \emph{contact metric
$(\kappa,\mu)$-manifold} (or that $\xi$ belongs to the
$(\kappa,\mu)$-nullity distribution). This definition was introduced
and deeply studied by Blair, Koufogiorgos and Papantoniou in
\cite{BKP-95}. Among other things, the authors proved the following
result.

\begin{theorem}[\cite{BKP-95}]\label{teoremagreci}
Let $\left(M,\varphi,\xi,\eta,g\right)$ be a contact metric
$(\kappa,\mu)$-manifold. Then necessarily $\kappa\leq 1$. Moreover,
if $\kappa=1$ then $h=0$ and $\left(M,\varphi,\xi,\eta,g\right)$ is
 Sasakian; if $\kappa<1$, the contact metric structure is not
Sasakian and $M$ admits three mutually orthogonal integrable
distributions ${\mathcal D}(0)=\mathbb{R}\xi$, ${\mathcal
D}(\lambda)$ and ${\mathcal D}(-\lambda)$ corresponding to the
eigenspaces of $h$, where $\lambda=\sqrt{1-\kappa}$.
\end{theorem}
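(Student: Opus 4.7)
The plan is to distil all three claims from a single algebraic identity
\[
h^2 = (\kappa - 1)\varphi^2,
\]
which I will extract by computing the Jacobi operator $\ell := R(\cdot,\xi)\xi$ in two different ways. Setting $Y = \xi$ in the defining relation \eqref{eq-km} and using $h\xi = 0$ gives $\ell X = \kappa\bigl(X - \eta(X)\xi\bigr) + \mu h X$. On the other hand, every contact metric manifold satisfies the classical identity $\varphi \ell \varphi - \ell = 2(\varphi^2 + h^2)$, obtained by expanding $R(X,\xi)\xi = -\nabla_\xi\nabla_X\xi - \nabla_{[X,\xi]}\xi$ via $\nabla_X\xi = -\varphi X - \varphi h X$ together with the anticommutation $\varphi h + h\varphi = 0$ from \eqref{eq-contact-7}. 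On $\mathcal{D}$, where $\varphi^2 = -I$, one has $\varphi h \varphi = -\varphi^2 h = h$, so the $\mu h$-contribution to $\varphi\ell\varphi - \ell$ cancels; the comparison collapses to $h^2 = (1-\kappa)I$ on $\mathcal{D}$, equivalently to the displayed identity on $TM$.

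The first two statements follow at once. Because $h$ is $g$-symmetric, $h^2$ is positive semi-definite, while $-\varphi^2 = I - \eta\otimes\xi$ is positive definite on $\mathcal{D}$; hence $1-\kappa \geq 0$, which is the first assertion. If $\kappa = 1$, then $h^2 = 0$ and symmetry forces $h = 0$; the $(\kappa,\mu)$-relation \eqref{eq-km} then reduces to $R_{XY}\xi = \eta(Y)X - \eta(X)Y$, which is the Sasakian curvature characterisation recalled in the preliminaries. If $\kappa < 1$, then $h$ is nonsingular on $\mathcal{D}$ with eigenvalues $\pm\lambda$, $\lambda := \sqrt{1-\kappa}$, and the three eigenspaces $\mathbb{R}\xi = \mathcal{D}(0)$, $\mathcal{D}(\lambda)$, $\mathcal{D}(-\lambda)$ are mutually orthogonal by symmetry of $h$; the structure cannot be Sasakian lest $h$ vanish.

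It remains to show that $\mathcal{D}(\pm\lambda)$ are integrable. For this I would derive, from \eqref{eq-km} together with $\nabla\xi = -\varphi - \varphi h$ and the second Bianchi identity, the formula
\[
(\nabla_X h)Y - (\nabla_Y h)X = 2(1-\kappa)\,g(X,\varphi Y)\,\xi \qquad \text{for all } X, Y \in \Gamma(\mathcal{D}).
\]
For $X, Y \in \Gamma(\mathcal{D}(\lambda))$ the left-hand side simplifies to $(\lambda I - h)[X,Y]$ (since $\lambda$ is constant and $\nabla$ is torsion-free); meanwhile $g(X,\varphi Y) = 0$, because $h\varphi = -\varphi h$ forces $\varphi(\mathcal{D}(\lambda)) = \mathcal{D}(-\lambda) \perp \mathcal{D}(\lambda)$. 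Decomposing $[X,Y] = a\xi + v_+ + v_-$ with $v_\pm \in \mathcal{D}(\pm\lambda)$, the operator $\lambda I - h$ sends these components to $\lambda a \xi$, $0$, and $2\lambda v_-$ respectively; hence $a = 0$ and $v_- = 0$, so $[X,Y] \in \mathcal{D}(\lambda)$. The argument for $\mathcal{D}(-\lambda)$ is symmetric.

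The principal obstacle is the derivation of the two preliminary identities $\varphi\ell\varphi - \ell = 2(\varphi^2 + h^2)$ and the one for $(\nabla_X h)Y - (\nabla_Y h)X$: both are direct but somewhat lengthy manipulations of the Levi-Civita covariant derivative, relying essentially on the contact-metric relations \eqref{eq-contact-7}. Once they are in hand, every remaining step is either elementary linear algebra on the eigenspaces of $h$ or a one-line substitution.
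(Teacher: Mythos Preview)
The paper does not supply its own proof of this theorem: it is quoted verbatim from \cite{BKP-95} as a background result in the preliminaries, with no argument attached. So there is nothing in the present paper to compare your proof against.

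That said, your outline is essentially the standard argument from \cite{BKP-95}. The identity $h^2=(\kappa-1)\varphi^2$ is indeed the pivot, and your derivation via $\varphi\ell\varphi-\ell=2(\varphi^2+h^2)$ is the usual one (this formula holds on every contact metric manifold and appears, for instance, in \cite{Blair-02}). The eigenvalue and Sasakian-case analysis is correct. For the integrability step, one small remark: the displayed formula for $(\nabla_X h)Y-(\nabla_Y h)X$ is obtained by a direct expansion of $R(X,Y)\xi$ through $\nabla\xi=-\varphi-\varphi h$, not through the second Bianchi identity; what you actually need is just the definition of curvature (or, if you prefer, the first Bianchi identity plays a role in related formulas in \cite{BKP-95}, but not the second). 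Apart from that slip in attribution, the argument is sound and matches the original source.
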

Given a non-Sasakian contact metric $(\kappa,\mu)$-manifold $M$,
Boeckx \cite{Boeckx-00} proved that the number
\begin{equation*}
I_{M}:=\frac{1-\frac{\mu}{2}}{\sqrt{1-\kappa}},
\end{equation*}
is an invariant of the contact metric $(\kappa,\mu)$-structure, and
he demonstrated that two non-Sasakian contact metric
$(\kappa,\mu)$-manifolds $(M_1,\varphi_1,\xi_1,\eta_1,g_1)$ and
$(M_2,\varphi_2,\xi_2,\eta_2,g_2)$ are locally isometric as contact
metric manifolds if and only if $I_{M_1}=I_{M_2}$. Then the
invariant $I_M$ has been used by Boeckx for giving a full
classification of contact metric $(\kappa,\mu)$-spaces.

The standard example of contact metric $(\kappa,\mu)$-manifolds is
given by the tangent sphere bundle $T_{1}N$ of a manifold of
constant curvature $c$ endowed with its standard contact metric
structure. In this case $\kappa=c(2-c)$, $\mu=-2c$ and
$I_{T_{1}N}=\frac{1+c}{|1-c|}$. Therefore as $c$ varies over the
reals, $I_{T_{1}N}$ takes on every value strictly greater than $-1$.
Moreover one can easily find that $I_{T_{1}N}<1$ if and only if
$c<0$.

\subsection{Legendre foliations}\label{preliminari} Let $(M,\eta)$ be
a $(2n+1)$-dimensional contact manifold. Notice that the condition
$\eta\wedge(d\eta)^n\neq 0$ implies that the contact distribution is
never integrable. One can prove that in fact the maximal dimension
of an integrable subbundle of $\mathcal D$ is $n$. This motivates
the following definition. A \emph{Legendre distribution} on a
contact manifold $(M,\eta)$ is an $n$-dimensional subbundle $L$ of
the contact distribution such that $d\eta\left(X,X'\right)=0$ for
all $X,X'\in\Gamma\left(L\right)$. Then by a \emph{Legendre
foliations} of $(M,\eta)$ we mean a foliation $\mathcal F$ of $M$
whose tangent bundle $L=T{\mathcal F}$ is a Legendre distribution,
according to the above definition.

Legendre foliations have been extensively investigated in recent
years from various points of views. In particular  Pang
(\cite{pang}) provided a classification of Legendre foliations by
means of a bilinear symmetric form $\Pi_{\mathcal F}$ on the tangent
bundle of the foliation ${\mathcal F}$, defined by
\begin{equation*}
\Pi_{\mathcal F}\left(X,X'\right)=-\left({\mathcal L}_{X}{\mathcal
L}_{X'}\eta\right)\left(\xi\right)=2d\eta([\xi,X],X').
\end{equation*}
He called a Legendre foliation \emph{non-degenerate},
\emph{degenerate} or \emph{flat} according to the circumstance that
the bilinear form $\Pi_{\mathcal F}$ is non-degenerate, degenerate
or vanishes identically, respectively. In terms of Lie brackets, the
flat condition is equivalent to the requirement that
$[\xi,X]\in\Gamma(T{\mathcal F})$ for all $X\in\Gamma(T{\mathcal
F})$.  Two interesting subclasses of non-degenerate Legendre
foliations are given by those for which $\Pi_{\mathcal F}$ is
positive definite and negative definite; we then speak of
\emph{positive definite} and \emph{negative definite Legendre
foliations}, respectively.

For \ a \ non-degenerate \ Legendre \ foliation \ $\mathcal F$, \
Libermann \ (\cite{libermann}) \ defined \ a \ linear \ map
$\Lambda_{\mathcal F}:TM\longrightarrow T{\mathcal F}$, whose
kernel is ${T\mathcal F}\oplus\mathbb{R}\xi$, such that
\begin{equation}\label{lambda}
\Pi_{\mathcal F}(\Lambda_{\mathcal F} Z,X)=d\eta(Z,X)
\end{equation}
for any $Z\in\Gamma(TM)$, $X\in\Gamma(T{\mathcal F})$. The operator
$\Lambda_{\mathcal F}$ is surjective,  verifies $(\Lambda_{\mathcal
F})^2=0$ and
\begin{equation}\label{proplambda}
\Lambda_{\mathcal F}[\xi,X]=\frac{1}{2}X
\end{equation}
for all $X\in\Gamma(T{\mathcal F})$.  Then we can extend
$\Pi_{\mathcal F}$ to a symmetric bilinear form on $TM$ by putting
\begin{equation*}
\overline\Pi_{\mathcal F}(Z,Z'):=\left\{
                                   \begin{array}{ll}
                                     \Pi_{\mathcal F}(Z,Z') & \hbox{if $Z,Z'\in\Gamma(T{\mathcal F})$} \\
                                     \Pi_{\mathcal F}(\Lambda_{\mathcal
F} Z,\Lambda_{\mathcal F} Z'), & \hbox{otherwise.}
                                   \end{array}
                                 \right.
\end{equation*}

If  $(M,\eta)$   admits  two  transversal  Legendre  distributions
$L_1$  and  $L_2$,  we  say  that $(M,\eta,L_1,L_2)$ is an
\emph{almost bi-Legendrian manifold}. Thus, in particular, the
tangent bundle of $M$ splits up as the direct sum $TM=L_1\oplus
L_2\oplus\mathbb{R}\xi$. When both $L_1$ and $L_2$ are integrable we
speak of \emph{bi-Legendrian manifold}. An (almost) bi-Legendrian
manifold is said to be flat, degenerate or non-degenerate if and
only if both the Legendre distributions are flat, degenerate or
non-degenerate, respectively.   Any contact manifold $(M,\eta)$
endowed with a Legendre distribution $L$ admits a canonical almost
bi-Legendrian structure. Indeed let $(\varphi,\xi,\eta,g)$ be a
compatible contact metric structure. Then by the relation
$d\eta(\phi\cdot,\phi\cdot)=d\eta$ it easily follows that $Q:=\phi
L$ is a Legendre distribution on $M$ which is $g$-orthogonal to $L$.
$Q$ is usually called the \emph{conjugate Legendre distribution} of
$L$ and in general is not integrable, even if $L$ is.

In \cite{Mino-05} (see also \cite{Mino-07}) a canonical connection,
which plays an important role in the study of almost bi-Legendrian
manifolds, has been introduced:

\begin{theorem}[\cite{Mino-05}]\label{biconnection}
Let $(M,\eta,L_1,L_2)$ be an almost bi-Legendrian manifold. There
exists a unique connection ${\nabla}^{bl}$ such that
\begin{enumerate}
  \item[(i)] ${\nabla}^{bl} L_1\subset L_1$, \ ${\nabla}^{bl} L_2\subset L_2$,
  \item[(ii)]  ${\nabla}^{bl}\xi=0$, \ ${\nabla}^{bl} d\eta=0$,
  \item[(iii)] ${T}^{bl}\left(X,Y\right)=2d\eta\left(X,Y\right){\xi}$ \ for all
  $X\in\Gamma(L_1)$, $Y\in\Gamma(L_2)$,\\
${T}^{bl}\left(X,\xi\right)=[\xi,X_{L_1}]_{L_2}+[\xi,X_{L_2}]_{L_1}$
  \ for all   $X\in\Gamma\left(TM\right)$,
\end{enumerate}
where ${T}^{bl}$ denotes the torsion tensor field of ${\nabla}^{bl}$
and $X_{L_1}$ and $X_{L_2}$ the projections of $X$ onto the
subbundles $L_1$ and $L_2$ of $TM$, respectively.
\end{theorem}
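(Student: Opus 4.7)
The plan is the standard two-step argument: first derive explicit formulas for $\nabla^{bl}$ from conditions (i)--(iii) (proving uniqueness), then verify that those formulas define a bona fide linear connection satisfying all the required properties. The decomposition $TM = L_1 \oplus L_2 \oplus \mathbb{R}\xi$ allows a case-by-case analysis of $\nabla^{bl}_X Y$ according to where $X$ and $Y$ lie.

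For the uniqueness side, (ii) immediately gives $\nabla^{bl}_X \xi = 0$ for all $X$. For $X \in \Gamma(L_1)$ and $Y \in \Gamma(L_2)$, the torsion identity in (iii), combined with (i) and the basic computation $\eta([X,Y]) = -2d\eta(X,Y)$ (valid since $\eta(X) = \eta(Y) = 0$), forces, upon projecting $\nabla^{bl}_X Y - \nabla^{bl}_Y X = [X,Y] + 2d\eta(X,Y)\xi$ onto $L_1$ and $L_2$, the formulas $\nabla^{bl}_X Y = [X,Y]_{L_2}$ and $\nabla^{bl}_Y X = [Y,X]_{L_1}$; the $\xi$-components cancel automatically. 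Writing out the second clause of (iii) with $\nabla^{bl}_X \xi = 0$ and $\eta([\xi,X]) = 0$ yields $\nabla^{bl}_\xi X = [\xi,X]_{L_i}$ for $X \in \Gamma(L_i)$. What remains is $\nabla^{bl}_X Y$ when $X$ and $Y$ lie in the \emph{same} Legendre subbundle, where no torsion constraint is available. For this I would invoke $\nabla^{bl} d\eta = 0$ on a triple $(X,Y,B)$ with $X, Y \in \Gamma(L_1)$ and $B \in \Gamma(L_2)$: substituting the already-determined $\nabla^{bl}_X B = [X,B]_{L_2}$ and using that $d\eta$ vanishes on $L_1 \times L_1$ and on any pair containing $\xi$, one obtains
\begin{equation*}
d\eta(\nabla^{bl}_X Y, B) = X(d\eta(Y,B)) - d\eta(Y, [X,B]).
\end{equation*}
Because $L_1$ and $L_2$ are transverse Legendre subbundles, $d\eta$ restricts to a non-degenerate pairing between them, so this relation determines $\nabla^{bl}_X Y \in L_1$ uniquely; the case $X, Y \in \Gamma(L_2)$ is symmetric.

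For existence I would take these formulas as definitions and extend linearly via the splitting of $TM$. The verifications reduce to: (a) each piece is $C^\infty(M)$-linear in the first slot and a derivation in the second --- the apparent obstructions from $[fX, Y]$ or $[X, fY]$ collapse because, for instance, $X \in L_1$ satisfies $X_{L_2} = 0$, which kills the stray $X(f)$ or $Y(f)$ terms produced by the Leibniz rule; and (b) properties (i)--(iii) hold, which is true by construction in all cases save $\nabla^{bl} d\eta = 0$ on triples not already used in the uniqueness step. The step I expect to be the main obstacle is precisely checking $\nabla^{bl} d\eta = 0$ on triples involving $\xi$: there one combines $\nabla^{bl}_\bullet \xi = 0$, $i_\xi d\eta = 0$, and the identity $\mathcal{L}_\xi d\eta = 0$ (which holds because $\xi$ is the Reeb field of $\eta$) to verify the required equality; on triples fully contained in a single Legendre subbundle, $d\eta$ vanishes identically and there is nothing to check. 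Once this bookkeeping is complete, the construction yields the desired $\nabla^{bl}$.
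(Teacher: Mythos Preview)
The paper does not supply a proof of this theorem: it is quoted in the Preliminaries section as a result from \cite{Mino-05}, with no argument given. So there is no ``paper's own proof'' to compare against.

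That said, your proposal is the standard and correct approach, and it is essentially the argument one finds in the original reference. The uniqueness half is clean: the torsion constraints pin down $\nabla^{bl}_X Y$ whenever $X$ and $Y$ lie in different summands of $TM = L_1 \oplus L_2 \oplus \mathbb{R}\xi$, and then the parallelism of $d\eta$, together with the fact that $d\eta$ is a non-degenerate pairing between the two Legendre subbundles, fixes the remaining ``same-subbundle'' derivatives. One small remark on your displayed formula: strictly speaking you substitute $\nabla^{bl}_X B = [X,B]_{L_2}$, but since $Y \in L_1$ and $d\eta$ annihilates $L_1 \times L_1$ and $L_1 \times \mathbb{R}\xi$, the expression $d\eta(Y,[X,B]_{L_2})$ equals $d\eta(Y,[X,B])$, so your formula is correct as written. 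The existence half is, as you say, bookkeeping; the only place requiring care is the verification of $\nabla^{bl} d\eta = 0$ on mixed triples, and your outline of how $\mathcal{L}_\xi d\eta = 0$ handles the $\xi$-direction is the right idea.
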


Such  a  connection  is  called  the  \emph{bi-Legendrian
connection}  of   the  almost bi-Legendrian manifold
$(M,\eta,L_1,L_2)$. We recall also the complete expression of the
torsion tensor field of $\nabla^{bl}$,
\begin{align}\label{torsione}
T^{bl}(X,Y)&=-[X_{L_1},Y_{L_1}]_{L_2\oplus\mathbb{R}\xi}-[X_{L_2},Y_{L_2}]_{L_1\oplus\mathbb{R}\xi}+2d\eta(X,Y)\xi\nonumber\\
&\quad+\eta(Y)\left([\xi,X_{L_1}]_{L_2}+[\xi,X_{L_2}]_{L_1}\right)-\eta(X)\left([\xi,Y_{L_1}]_{L_2}+[\xi,Y_{L_2}]_{L_1}\right).
\end{align}

\section{The main results}\label{primasezione}

By Theorem \ref{teoremagreci} it follows that any non-Sasakian
contact metric $(\kappa,\mu)$-manifold is endowed with a canonical
 bi-Legendrian structure given by the mutually
orthogonal integrable distributions ${\mathcal D}(\lambda)$ and
${\mathcal D}(-\lambda)$. Therefore we can classify non-Sasakian
contact metric $(\kappa,\mu)$-manifolds by using the aforementioned
Pang's classification of Legendre foliations based on the behavior
of the invariants $\Pi_{\mathcal D(\lambda)}$ and $\Pi_{\mathcal
D(-\lambda)}$. The explicit expression of the invariants of the
Legendre foliations defined by ${\mathcal D}(\lambda)$ and
${\mathcal D}(-\lambda)$ was found in in \cite{Mino-Luigia-07}:
\begin{gather}
\Pi_{{\mathcal
D}(\lambda)}=\frac{\left(\lambda+1\right)^2-\kappa-\mu\lambda}{\lambda}g|_{{\mathcal
D}(\lambda)\times{\mathcal
D}(\lambda)}=\left(2\sqrt{1-\kappa}-\mu+2\right)g|_{{\mathcal
D}(\lambda)\times{\mathcal D}(\lambda)},\label{invariante1}\\
\Pi_{{\mathcal
D}(-\lambda)}=\frac{-\left(\lambda-1\right)^2+\kappa-\mu\lambda}{\lambda}g|_{{\mathcal
D}(-\lambda)\times{\mathcal
D}(-\lambda)}=\left(-2\sqrt{1-\kappa}-\mu+2\right)g|_{{\mathcal
D}(-\lambda)\times{\mathcal D}(-\lambda)}\label{invariante2}.
\end{gather}
Using \eqref{invariante1}--\eqref{invariante2} we can classify
non-Sasakian contact metric $(\kappa,\mu)$-manifolds as follows.

\begin{theorem}\label{classificazione}
Let $(M,\varphi,\xi,\eta,g)$ be a non-Sasakian contact metric
$(\kappa,\mu)$-manifold. Then the bi-Legendrian structure
$({\mathcal D}(\lambda),{\mathcal D}(-\lambda))$ associated to
$(M,\varphi,\xi,\eta,g)$ is non-flat. More precisely, only one among
the following cases occurs:
\begin{enumerate}
  \item[(I)] both ${\mathcal D}(\lambda)$ and  ${\mathcal
D}(-\lambda)$ are positive definite;
  \item[(II)] ${\mathcal D}(\lambda)$ is positive definite and  ${\mathcal
D}(-\lambda)$ is negative definite;
\item[(III)] both ${\mathcal D}(\lambda)$ and  ${\mathcal
D}(-\lambda)$ are negative definite;
\item[(IV)]  ${\mathcal D}(\lambda)$ is positive definite and  ${\mathcal
D}(-\lambda)$ is flat;
\item[(V)] ${\mathcal D}(\lambda)$ is flat and ${\mathcal
D}(-\lambda)$ is negative definite.
\end{enumerate}
Furthermore, $M$ belongs to the class (I), (II), (III), (IV), (V) if
and only if $I_M>1$, $-1<I_M<1$,  $I_M<-1$, $I_M=1$, $I_M=-1$,
respectively.
\end{theorem}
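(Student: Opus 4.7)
The plan is to read off the classification directly from the explicit formulas \eqref{invariante1} and \eqref{invariante2}, which already express $\Pi_{\mathcal{D}(\lambda)}$ and $\Pi_{\mathcal{D}(-\lambda)}$ as scalar multiples of the restriction of the Riemannian metric $g$ to the respective eigendistributions. Since $g$ is positive definite on each $\mathcal{D}(\pm\lambda)$, the entire classification reduces to analyzing the signs of the two scalar coefficients
\begin{equation*}
a_+ := 2\sqrt{1-\kappa}-\mu+2, \qquad a_- := -2\sqrt{1-\kappa}-\mu+2.
\end{equation*}

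First I would note that, since the hypothesis rules out the Sasakian case, Theorem~\ref{teoremagreci} gives $\kappa<1$, so $\lambda=\sqrt{1-\kappa}>0$. Dividing $a_+$ and $a_-$ by the positive number $2\lambda$ one recognizes at once the Boeckx invariant: indeed $a_+/(2\lambda)=1+I_M$ and $a_-/(2\lambda)=I_M-1$. Hence $\Pi_{\mathcal{D}(\lambda)}$ is positive definite, identically zero, or negative definite according as $I_M>-1$, $I_M=-1$, or $I_M<-1$; similarly $\Pi_{\mathcal{D}(-\lambda)}$ is positive definite, zero, or negative definite according as $I_M>1$, $I_M=1$, or $I_M<1$.

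Then I would intersect these sign conditions pairwise to match the nine a priori possibilities with the five classes listed in the statement. For example, ``both foliations positive definite'' translates into $I_M>-1$ and $I_M>1$, i.e.\ $I_M>1$, giving class (I); class (II) corresponds to $-1<I_M<1$; class (III) to $I_M<-1$; the mixed cases (IV) and (V) force $I_M=1$ and $I_M=-1$, respectively. The remaining four combinations -- in particular the one in which both invariants vanish (which would require $I_M=1$ and $I_M=-1$ simultaneously) and the one in which $\mathcal{D}(\lambda)$ is negative definite while $\mathcal{D}(-\lambda)$ is positive definite (which would require $I_M<-1$ and $I_M>1$) -- are arithmetically incompatible, so they never occur. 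In particular the bi-Legendrian structure is always non-flat, which is the first assertion of the theorem.

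There is no serious obstacle in the argument: once \eqref{invariante1}--\eqref{invariante2} are available the proof is essentially a short piece of bookkeeping, in which the Boeckx invariant emerges automatically after the rescaling by the positive factor $2\lambda$. The only point requiring a moment of care is, for $I_M=\pm 1$, to verify that exactly one of the two Pang invariants vanishes (not both), and this is precisely where the non-Sasakian hypothesis $\lambda>0$ is used.
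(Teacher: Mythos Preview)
Your proof is correct and follows essentially the same approach as the paper: both arguments read off the signs of the scalar coefficients in \eqref{invariante1}--\eqref{invariante2}, relate them to $I_M$, and then eliminate the arithmetically incompatible combinations. Your explicit rescaling by $2\lambda$ to obtain $a_+/(2\lambda)=1+I_M$ and $a_-/(2\lambda)=I_M-1$ is a slightly cleaner way of phrasing the same sign analysis the paper carries out.
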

\begin{proof}
By \eqref{invariante1}--\eqref{invariante2} we have immediately
that ${\mathcal D}(\lambda)$ and ${\mathcal D}(-\lambda)$ are
either positive definite or positive negative or flat, depending
on the sign of the functions
$f_1(\kappa,\mu)=2\sqrt{1-\kappa}-\mu+2$ and
$f_2\left(\kappa,\mu\right)=-2\sqrt{1-\kappa}-\mu+2$. Since
$f_1\left(\kappa,\mu\right)$ and $f_2\left(\kappa,\mu\right)$ both
vanish if and only if $\kappa=1$, the bi-Legendrian structure
$({\mathcal D}(\lambda),{\mathcal D}(-\lambda))$ turns out to be
non-flat. Moreover, one easily finds that $f_1(\kappa,\mu)>0$ if
and only if $I_M>-1$ and $f_2(\kappa,\mu)>0$ if and only if
$I_M>1$. Consequently, taking into account
\eqref{invariante1}--\eqref{invariante2}, the cases
$\Pi_{{\mathcal D}(\lambda)}$ negative definite and
$\Pi_{{\mathcal D}(-\lambda)}$ positive definite, $\Pi_{{\mathcal
D}(\lambda)}=0$ and $\Pi_{{\mathcal D}(-\lambda)}$ positive
definite, $\Pi_{{\mathcal D}(\lambda)}$ negative definite and
$\Pi_{{\mathcal D}(-\lambda)}=0$ can not occur, and the remaining
combinations of all possible signs of $f_1(\kappa,\mu)$ and of
$f_2(\kappa,\mu)$ give the claimed assertion.
\end{proof}

Using Theorem \ref{classificazione} we are able to study the
following interesting problem. It is a well-known question in
contact geometry whether, given a contact manifold $(M,\eta)$, there
exists a Sasakian structure on $M$ compatible with the contact form
$\eta$. Now we generalize this problem and we ask whether, given a
contact manifold $(M,\eta)$, there exists a compatible contact
metric structure $(\varphi,\xi,\eta,g)$ such that
$(M,\varphi,\xi,\eta,g)$ is a contact metric
$(\kappa,\mu)$-manifold. In order to answer this question we need to
recall the following lemma proven in \cite{Mino-07}.

\begin{lemma}[\cite{Mino-07}]\label{lemmarocky}
Let $(M,\varphi,\xi,\eta,g)$ be a contact metric manifold endowed
with a Legendre distribution $L$. Let $Q:=\varphi L$ be the
conjugate Legendre distribution of $L$ and $\nabla^{bl}$ the
bi-Legendrian connection associated to $\left(L,Q\right)$. Then the
following statements are equivalent:
\begin{enumerate}
    \item[(i)] $\nabla^{bl} g=0$.
    \item[(ii)] $\nabla^{bl}\varphi=0$.
    \item[(iii)] $\nabla^{bl}_{X}X'=-\left(\varphi\left[X,\varphi
    X'\right]\right)_L$ for all $X,X'\in\Gamma\left(L\right)$, $\nabla^{bl}_{Y}Y'=-\left(\varphi\left[Y,\varphi
    Y'\right]\right)_Q$ for all $Y,Y'\in\Gamma\left(Q\right)$ and
    the tensor field $h$ maps the subbundle $L$
    onto $L$ and the subbundle $Q$ onto $Q$.
    \item[(iv)] $g$ is a bundle-like metric with respect
both to the distribution $L\oplus \mathbb{R}\xi$ and to the
distribution $Q\oplus \mathbb{R}\xi$.
\end{enumerate}
Furthermore, assuming $L$ and $Q$ integrable, (i)--(iv) are
equivalent to the total geodesicity (with respect to the Levi Civita
connection of $g$) of the Legendre foliations defined by $L$ and
$Q$.
\end{lemma}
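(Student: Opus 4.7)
The strategy is to establish $(\mathrm{i})\Leftrightarrow(\mathrm{ii})\Leftrightarrow(\mathrm{iii})$ and $(\mathrm{i})\Leftrightarrow(\mathrm{iv})$ separately, and then to treat the total-geodesicity statement at the end. The single crucial fact that I would exploit throughout is that the torsion conditions of Theorem \ref{biconnection}, combined with $\nabla^{bl}L_i\subset L_i$, force (with no metric assumption at all)
\begin{equation*}
\nabla^{bl}_{V} X = [V,X]_{L},\qquad \nabla^{bl}_{X}(\varphi X') = [X,\varphi X']_{Q},
\end{equation*}
for all $V\in\Gamma(Q\oplus\mathbb{R}\xi)$ and $X,X'\in\Gamma(L)$, together with the symmetric pair obtained by interchanging $L$ and $Q$. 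These identities, direct consequences of \eqref{torsione}, are the bridge between Lie brackets (which control \textup{(iv)}) and the covariant derivative of $\varphi$ (which controls \textup{(iii)}).

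The equivalence $(\mathrm{i})\Leftrightarrow(\mathrm{ii})$ is immediate from differentiating the compatibility identity $d\eta(X,Y)=g(X,\varphi Y)$ along $\nabla^{bl}$, keeping in mind $\nabla^{bl}\xi=0$, $\nabla^{bl}d\eta=0$, and that $\nabla^{bl}\eta=0$ may be deduced either from $\eta=g(\,\cdot\,,\xi)$ in the $(\mathrm{i})$-direction or by differentiating $\varphi^{2}=-\mathrm{Id}+\eta\otimes\xi$ in the $(\mathrm{ii})$-direction. For $(\mathrm{ii})\Rightarrow(\mathrm{iii})$, substituting $\varphi(\nabla^{bl}_X X')$ for $\nabla^{bl}_X(\varphi X')$ in the second bracket identity above and applying $\varphi$ once more yields $\nabla^{bl}_X X'=-(\varphi[X,\varphi X'])_L$; the analogous formula on $Q$ follows identically. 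To establish $h(L)\subset L$, I would expand $2hX=[\xi,\varphi X]-\varphi[\xi,X]$, split each bracket according as its second argument lies in $L$ or in $Q$ by means of $\nabla^{bl}_\xi W=[\xi,W]_L$ or $\nabla^{bl}_\xi W=[\xi,W]_Q$, and observe that the $Q$-component of $hX$ equals $\tfrac12(\nabla^{bl}_\xi\varphi)(X)$, which vanishes under \textup{(ii)}. The converse $(\mathrm{iii})\Rightarrow(\mathrm{ii})$ is a case-by-case check of $(\nabla^{bl}_Z\varphi)(W)=0$ with $Z,W$ running through $L$, $Q$, $\mathbb{R}\xi$: the $Z=\xi$ cases are handled by the $h$-invariance built into \textup{(iii)}; the cases with $Z$ and $W$ both in $L$ (respectively both in $Q$) cancel via $\varphi^{2}|_{\mathcal{D}}=-\mathrm{Id}$; the mixed cases follow from the same bracket formula.

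For $(\mathrm{i})\Leftrightarrow(\mathrm{iv})$, the identity $\nabla^{bl}_V X=[V,X]_L$ for $V\in\Gamma(Q\oplus\mathbb{R}\xi)$ and $X\in\Gamma(L)$ gives
\begin{equation*}
(\nabla^{bl}_V g)(X,X') = (\mathcal{L}_V g)(X,X'),\qquad X,X'\in\Gamma(L),
\end{equation*}
because the $Q$- and $\mathbb{R}\xi$-components of $[V,X]$ are $g$-orthogonal to $X'$; hence bundle-likeness of $g$ with respect to $Q\oplus\mathbb{R}\xi$ is exactly $\nabla^{bl}g=0$ in these arguments, and the mirror statement handles $L\oplus\mathbb{R}\xi$. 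The remaining ``internal'' cases where all three arguments lie in $L$ (or all three in $Q$) are recovered by substituting the formula for $\nabla^{bl}_Z X$ supplied by \textup{(iii)}, which we have already shown to be equivalent to \textup{(i)}. Finally, assuming $L$ integrable, total geodesicity of the $L$-foliation amounts to $g(\nabla_X X',Z)=0$ for $Z\in\Gamma(Q\oplus\mathbb{R}\xi)$: using $\nabla\xi=-\varphi-\varphi h$ one obtains $g(\nabla_X X',\xi)=d\eta(X',hX)$, which vanishes precisely when $hX\in L$, while the transverse part $g(\nabla_X X',Y)$ for $Y\in Q$ is rearranged, via the contact metric relations \eqref{acca}, \eqref{eq-contact-7} and the anticommutation $h\varphi+\varphi h=0$, into a combination of Lie brackets matching the formula of \textup{(iii)} for $\nabla^{bl}_X X'$; since $\nabla^{bl}_X X'\in L$, this identifies the $L$-component of $\nabla_X X'$ with $\nabla^{bl}_X X'$ itself, forcing $\nabla_X X'\in L$. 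The symmetric argument handles $Q$, and the reverse implications are read off the same equalities.

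The principal obstacle is this last comparison between $\nabla$ and $\nabla^{bl}$. In a general non-Sasakian contact metric manifold there is no closed-form expression for $\nabla\varphi$ analogous to \eqref{condizionesasaki}, so $g(\nabla_X X',Y)$ cannot be produced by a clean Koszul-type calculation; it must be manipulated by carefully combining the bracket formula $\nabla^{bl}_X(\varphi X')=[X,\varphi X']_Q$, the anticommutation $h\varphi+\varphi h=0$, and the Legendre property $d\eta|_{L\times L}=0$. Once this algebraic bridge has been set up, the remaining steps reduce to routine bookkeeping with the contact metric identities of Section 2.
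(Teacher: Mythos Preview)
The paper does not contain a proof of this lemma at all: it is quoted verbatim from \cite{Mino-07} and used as a black box in the proofs of Theorems~\ref{legendre1} and~\ref{legendre2}. There is therefore nothing in the present paper to compare your argument against.

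That said, your outline is broadly sound and follows what one would expect the original proof in \cite{Mino-07} to look like. The bracket identities $\nabla^{bl}_{V}X=[V,X]_{L}$ and $\nabla^{bl}_{X}(\varphi X')=[X,\varphi X']_{Q}$ do follow from the torsion description in Theorem~\ref{biconnection} exactly as you say, and they are indeed the right pivot for all four equivalences. One point to be careful about: in your treatment of $(\mathrm{i})\Leftrightarrow(\mathrm{iv})$ you dispose of the ``internal'' cases (all three arguments in $L$, or all three in $Q$) by invoking \textup{(iii)}, which you have only shown equivalent to \textup{(i)}; this makes the argument for $(\mathrm{iv})\Rightarrow(\mathrm{i})$ circular as written. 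You need an independent verification that bundle-likeness forces $(\nabla^{bl}_{X}g)(X',X'')=0$ for $X,X',X''\in\Gamma(L)$, or else reorganise the logic so that \textup{(iii)} is established from \textup{(iv)} directly before closing the loop. The final paragraph on total geodesicity is also somewhat compressed: the claim that the $Q$-component of $\nabla_{X}X'$ matches the bracket expression in \textup{(iii)} genuinely requires a Koszul-formula computation using $d\eta|_{L\times L}=0$ and $g(\varphi\cdot,\cdot)=-g(\cdot,\varphi\cdot)$, and you should spell that step out rather than assert it.
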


\begin{theorem}\label{legendre1}
Let $(M,\eta)$ be a contact manifold endowed with a  bi-Legendrian
structure $({\mathcal F}_1,{\mathcal F}_2)$ such that
$\nabla^{bl}\Pi_{{\mathcal F}_1}=\nabla^{bl}\Pi_{{\mathcal F}_2}=0$.
Assume that one of the following conditions holds
\begin{itemize}
  \item[(I)] ${\mathcal F}_1$ and ${\mathcal F}_2$ are positive
definite and there exist two positive
 numbers $a$ and $b$ such that $\overline\Pi_{{\mathcal
F}_1}=ab\overline\Pi_{{\mathcal F}_2}$ on $T{\mathcal F}_1$ and
$\overline\Pi_{{\mathcal F}_2}=ab\overline\Pi_{{\mathcal F}_1}$ on
$T{\mathcal F}_2$,
  \item[(II)] ${\mathcal F}_1$ is positive definite,  ${\mathcal F}_2$
is negative definite and there exist $a>0$ and $b<0$ such that
$\overline\Pi_{{\mathcal F}_1}=ab\overline\Pi_{{\mathcal F}_2}$ on
$T{\mathcal F}_1$ and $\overline\Pi_{{\mathcal
F}_2}=ab\overline\Pi_{{\mathcal F}_1}$ on $T{\mathcal F}_2$,
  \item[(III)]  ${\mathcal F}_1$ and ${\mathcal F}_2$ are negative
definite and there exist two negative
 numbers $a$ and $b$ such that $\overline\Pi_{{\mathcal
F}_1}=ab\overline\Pi_{{\mathcal F}_2}$ on $T{\mathcal F}_1$ and
$\overline\Pi_{{\mathcal F}_2}=ab\overline\Pi_{{\mathcal F}_1}$ on
$T{\mathcal F}_2$.
\end{itemize}
Then $(M,\eta)$ admits a  compatible contact metric structure
$(\varphi,\xi,\eta,g)$ such that
\begin{enumerate}
  \item[(i)] if $a=b$, $(M,\varphi,\xi,\eta,g)$ is a Sasakian
manifold;
  \item[(ii)] if $a\neq b$, $(M,\varphi,\xi,\eta,g)$ is a contact metric
$(\kappa,\mu)$-manifold, whose  associated bi-Legendrian structure
is $({\mathcal F}_1,{\mathcal F}_2)$, where
\begin{equation}\label{costanti0}
\kappa=1-\frac{(a-b)^2}{16}, \ \ \ \mu=2-\frac{a+b}{2}.
\end{equation}
\end{enumerate}
\end{theorem}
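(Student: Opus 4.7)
The plan is to construct $(\varphi,\xi,\eta,g)$ explicitly out of the bi-Legendrian data and then apply Theorem~\ref{principale0} to the bi-Legendrian connection $\nabla^{bl}$ of $(\mathcal{F}_1,\mathcal{F}_2)$. First I would define $g$ by declaring $T\mathcal{F}_1$, $T\mathcal{F}_2$ and $\mathbb{R}\xi$ to be mutually orthogonal, $g(\xi,\xi) = 1$, and
\[
g|_{T\mathcal{F}_1\times T\mathcal{F}_1} := \tfrac{1}{a}\Pi_{\mathcal{F}_1},\qquad g|_{T\mathcal{F}_2\times T\mathcal{F}_2} := \tfrac{1}{b}\Pi_{\mathcal{F}_2};
\]
in each of cases (I)--(III) the signs of $a$ and $b$ match the signatures of the respective $\Pi_{\mathcal{F}_i}$, so $g$ is positive definite. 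Since both foliations are non-degenerate, the Libermann operators $\Lambda_{\mathcal{F}_1}, \Lambda_{\mathcal{F}_2}$ are at my disposal, and I would set $\varphi\xi := 0$, $\varphi := -b\,\Lambda_{\mathcal{F}_2}$ on $T\mathcal{F}_1$, and $\varphi := -a\,\Lambda_{\mathcal{F}_1}$ on $T\mathcal{F}_2$.

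The compatibility relations~\eqref{acca} then follow by direct check from~\eqref{lambda}: the identities $d\eta(X,Y) = g(X,\varphi Y)$ and $g(\varphi\cdot,\varphi\cdot) = g - \eta\otimes\eta$ unravel straightforwardly, while the key nontrivial point is $\varphi^2 = -I + \eta\otimes\xi$. On $T\mathcal{F}_1$ one has $\varphi^2 X = ab\,\Lambda_{\mathcal{F}_1}\Lambda_{\mathcal{F}_2}X$, and \eqref{lambda} together with the symmetry of the $\Pi_{\mathcal{F}_i}$ yields
\[
\Pi_{\mathcal{F}_1}(\Lambda_{\mathcal{F}_1}\Lambda_{\mathcal{F}_2}X, Z) = -\overline\Pi_{\mathcal{F}_2}(X,Z) \qquad \text{for } X, Z \in T\mathcal{F}_1;
\]
the hypothesis $\overline\Pi_{\mathcal{F}_1} = ab\,\overline\Pi_{\mathcal{F}_2}$ on $T\mathcal{F}_1$ then forces $\varphi^2|_{T\mathcal{F}_1} = -I$. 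The analogous computation on $T\mathcal{F}_2$ concludes the proof that $(\varphi,\xi,\eta,g)$ is a contact metric structure.

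To identify the structure as $(\kappa,\mu)$ I would verify the hypotheses of Theorem~\ref{principale0}. Properties~(i) and~(iii) are built into $\nabla^{bl}$ by Theorem~\ref{biconnection}. Because $\nabla^{bl}$ preserves the orthogonal decomposition $TM = T\mathcal{F}_1 \oplus T\mathcal{F}_2 \oplus \mathbb{R}\xi$, the relation $\nabla^{bl}g = 0$ reduces to $\nabla^{bl}\Pi_{\mathcal{F}_i} = 0$, which is part of the hypothesis; Lemma~\ref{lemmarocky} then gives $\nabla^{bl}\varphi = 0$. To obtain $\nabla^{bl}h = 0$ I would first compute $h = \tfrac{1}{2}\mathcal{L}_\xi\varphi$ explicitly, combining the definition of $\varphi$, the identity $\Lambda_{\mathcal{F}_i}[\xi,X] = \tfrac{1}{2}X$ from~\eqref{proplambda}, and the $\nabla^{bl}$-parallelism of $\Pi_{\mathcal{F}_i}$, to show that $h$ acts as $\tfrac{a-b}{4}$ on $T\mathcal{F}_1$ and $-\tfrac{a-b}{4}$ on $T\mathcal{F}_2$; being a constant linear combination of the (parallel) projections onto the two subbundles, $h$ is automatically $\nabla^{bl}$-parallel. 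When $a\neq b$, Theorem~\ref{principale0} then identifies $(M,\varphi,\xi,\eta,g)$ as a non-Sasakian contact metric $(\kappa,\mu)$-manifold with $\lambda = |a-b|/4$; this gives $\kappa = 1 - (a-b)^2/16$, and reading off $\mu$ from~\eqref{invariante1}--\eqref{invariante2} together with the scaling of $g$ yields $\mu = 2 - (a+b)/2$.

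In the case $a = b$ the same computation of $h$ gives $h \equiv 0$, so the curvature identity~\eqref{eq-km} degenerates to the Sasakian condition $R_{XY}\xi = \eta(Y)X - \eta(X)Y$; combining the bi-Legendrian argument above with Theorem~\ref{teoremagreci} then forces $(\varphi,\xi,\eta,g)$ to be Sasakian. I expect the main obstacle to be the explicit computation of $h$ and the proof that it acts by the claimed constant scalars $\pm\tfrac{a-b}{4}$ on each $T\mathcal{F}_i$, since this is precisely where the interaction between $\mathcal{L}_\xi$, the Libermann operators, and the parallelism condition $\nabla^{bl}\Pi_{\mathcal{F}_i} = 0$ must be carefully unwound.
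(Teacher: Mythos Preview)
Your construction of $(\varphi,\xi,\eta,g)$ and the treatment of the case $a\neq b$ coincide with the paper's proof essentially line for line: the paper defines $g$ and $\varphi$ exactly as you do, verifies the contact metric axioms via the Libermann operators and the hypothesis $\overline\Pi_{\mathcal F_1}=ab\,\overline\Pi_{\mathcal F_2}$, invokes Lemma~\ref{lemmarocky} to get $\nabla^{bl}\varphi=0$ and $h(T\mathcal F_i)\subset T\mathcal F_i$, computes $h|_{T\mathcal F_1}=\tfrac{a-b}{4}I$ and $h|_{T\mathcal F_2}=-\tfrac{a-b}{4}I$ from~\eqref{proplambda}, and then applies Theorem~\ref{principale0}.

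The gap is in the case $a=b$. From $h\equiv 0$ you know only that the structure is $K$-contact; you do \emph{not} yet know that the curvature identity~\eqref{eq-km} holds, so the sentence ``the curvature identity~\eqref{eq-km} degenerates to the Sasakian condition'' is circular. Theorem~\ref{principale0} is stated for \emph{non-Sasakian} contact metric manifolds, and it is its ``if'' direction that would yield~\eqref{eq-km}; since your goal is precisely to conclude Sasakian, you cannot feed the structure into that theorem directly, and Theorem~\ref{teoremagreci} likewise presupposes a $(\kappa,\mu)$-structure that you have not established. The paper closes this case differently: it verifies $N_\varphi=0$ by hand, using condition~(iii) of Lemma~\ref{lemmarocky} together with~\eqref{torsione} to show $(N_\varphi(X,X'))_{T\mathcal F_1}=0$ for $X,X'\in\Gamma(T\mathcal F_1)$, and then the identities~\eqref{formulenijenhuis1}--\eqref{formulenijenhuis2} to kill the remaining components. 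Your approach could be rescued by a short contradiction argument (assume non-Sasakian, apply Theorem~\ref{principale0}, obtain a $(\kappa,\mu)$-structure with $h=0$, hence $\kappa=1$, hence Sasakian by Theorem~\ref{teoremagreci}, contradiction), but as written the step does not stand.
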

\begin{proof}
We  consider,  for  each   Legendre  foliation  ${\mathcal F}_1$ and
 ${\mathcal F}_2$,  the  Libermann  operators $\Lambda_{{\mathcal
F}_1}:TM\longrightarrow T{\mathcal F}_1$ and $\Lambda_{{\mathcal
F}_2}:TM\longrightarrow T{\mathcal F}_2$ defined by \eqref{lambda}.
Then we set
\begin{equation}\label{costruzionemetrica}
g|_{T{\mathcal F}_1\times T{\mathcal
F}_1}:=\frac{1}{a}\Pi_{{\mathcal F}_1}, \ \ g|_{T{\mathcal
F}_2\times T{\mathcal F}_2}:=\frac{1}{b}\Pi_{{\mathcal F}_2}, \ \
g:=\eta\otimes\eta \ \textrm{elsewhere.}
\end{equation}
That $g$ is a Riemannian metric follows from the fact that the
bilinear map $\Pi_{{\mathcal F}_1}$ and $\Pi_{{\mathcal F}_2}$ are
symmetric and, by the assumptions (I)--(III), they are positive or
negative definite according to the signs of $a$ and $b$,
respectively, in such a way that the bilinear forms
$\frac{1}{a}\Pi_{{\mathcal F}_1}$ and $\frac{1}{b}\Pi_{{\mathcal
F}_2}$  are always positive definite. In particular by
\eqref{costruzionemetrica} we have that ${\mathcal F}_1={{\mathcal
F}_2}^{\perp}\cap\mathcal D$ and ${\mathcal F}_2={{\mathcal
F}_1}^{\perp}\cap\mathcal D$.  Next let us define a tensor field
$\varphi$ by
\begin{equation}\label{struttura}
\varphi Z:=\left\{
             \begin{array}{ll}
               -b\Lambda_{{\mathcal F}_2} Z, & \hbox{if $Z\in\Gamma(T{\mathcal F}_1)$,} \\
               -a\Lambda_{{\mathcal F}_1} Z, & \hbox{if $Z\in\Gamma(T{\mathcal F}_2)$,} \\
               0, & \hbox{if $Z\in\Gamma(\mathbb{R}\xi)$.}
             \end{array}
           \right.
\end{equation}
Notice that, by definition, $\varphi$ maps $T{\mathcal F}_1$ onto
$T{\mathcal F}_2$ and $T{\mathcal F}_2$ onto $T{\mathcal F}_1$.
Moreover, for any $X,X'\in\Gamma(T{\mathcal F}_1)$,
\begin{gather*}
\Pi_{{\mathcal F}_1}(\varphi^2 X,X')=ab\Pi_{{\mathcal
F}_1}(\Lambda_{{\mathcal F}_1}\Lambda_{{\mathcal F}_2} X,X')=ab
d\eta(\Lambda_{{\mathcal F}_2} X,X')=-ab d\eta(X',\Lambda_{{\mathcal
F}_2} X)\\
=-ab \Pi_{{\mathcal F}_2}(\Lambda_{{\mathcal F}_2}
X',\Lambda_{{\mathcal F}_2} X)=-ab\overline\Pi_{{\mathcal
F}_2}(X,X')=-\frac{ab}{ab}\overline\Pi_{{\mathcal
F}_1}(X,X')=-\Pi_{{\mathcal F}_1}(X,X')
\end{gather*}
from which it follows that $\varphi^2 X=-X$. Analogously one can
prove that $\varphi^2 Y=-Y$ for all $Y\in\Gamma(T{\mathcal F}_2)$.
Thus $\varphi^2=-I+\eta\otimes\xi$. We prove that
$(\varphi,\xi,\eta,g)$ is in fact a contact metric structure.
Indeed, for all $X,X'\in\Gamma(T{\mathcal F}_1)$
\begin{gather*}
g(\varphi X,\varphi X')=b^2 g(\Lambda_{{\mathcal F}_2}
X,\Lambda_{{\mathcal F}_2} X')=b\Pi_{{\mathcal
F}_2}(\Lambda_{{\mathcal F}_2} X,\Lambda_{{\mathcal F}_2}
X')\\
=b\overline\Pi_{{\mathcal F}_2}(X,X')=\frac{1}{a}\Pi_{{\mathcal
F}_1}(X,X')=g(X,X').
\end{gather*}
Analogously one has $g(\varphi Y,\varphi Y')=g(Y,Y')$ for all
$Y,Y'\in\Gamma(T{\mathcal F_2})$, so that we can conclude that
$g(\varphi\cdot,\varphi\cdot)=g(\cdot,\cdot)-\eta\otimes\eta$.
Furthermore, for all $X\in\Gamma(T{\mathcal F}_1)$ and
$Y\in\Gamma(T{\mathcal F}_2)$ we have
\begin{gather*}
g(X,\varphi Y)=\frac{1}{a}\Pi_{{\mathcal F}_1}(X,\varphi
Y)=-\Pi_{{\mathcal F}_1}(X,\Lambda_{{\mathcal F}_1}
Y)\\
=-\Pi_{{\mathcal F}_1}(\Lambda_{{\mathcal F}_1}
Y,X)=-d\eta(Y,X)=d\eta(X,Y)
\end{gather*}
and, in the same way, $g(Y,\varphi X)=d\eta(Y,X)$. Moreover, since
${\mathcal F}_1$ and ${\mathcal F}_2$ are mutually orthogonal with
respect to $g$ and they are Legendre foliations, we have
$d\eta(X,X')=0=g(X,\varphi X')$ and $d\eta(Y,Y')=0=g(Y,\varphi Y')$
for all $X,X'\in\Gamma(T{\mathcal F}_1)$ and for all
$Y,Y'\in\Gamma(T{\mathcal F}_2)$. Therefore
$d\eta=g(\cdot,\varphi\cdot)$ and $(\varphi,\xi,\eta,g)$ is contact
metric structure. Notice that ${\mathcal F}_1$ and ${\mathcal F}_2$
are conjugate Legendre foliations with respect to
$(\varphi,\xi,\eta,g)$, since $\varphi(T{\mathcal F}_1)=T{\mathcal
F}_2$ and $\varphi(T{\mathcal F}_2)=T{\mathcal F}_1$.  Now, since
$\nabla^{bl}\Pi_{{\mathcal F}_1}=\nabla^{bl}\Pi_{{\mathcal F}_2}=0$,
we have that the bi-Legendrian connection preserves the Riemannian
metric $g$ and this, by Lemma \ref{lemmarocky}, implies that
$\nabla^{bl}\varphi=0$ and $h:=\frac{1}{2}{\mathcal L}_{\xi}\varphi$
preserves the foliations ${\mathcal F}_1$ and ${\mathcal F}_2$.
Then, as $\ker(\Lambda_{{\mathcal F}_1})=T{\mathcal
F}_1\oplus\mathbb{R}\xi$ and by \eqref{proplambda} we have, for any
$X\in\Gamma(T{\mathcal F}_1)$, $\varphi([\xi,X]_{T{\mathcal
F}_2})=-a\Lambda_{{\mathcal F}_1}([\xi,X]_{T{\mathcal
F}_2})=-a\Lambda_{{\mathcal F}_1}[\xi,X]=-\frac{a}{2}X$, hence
\begin{equation}\label{formulaphi1}
\varphi X=\frac{2}{a}[\xi,X]_{T{\mathcal F}_2}.
\end{equation}
Analogously, one can prove that
\begin{equation}\label{formulaphi2}
\varphi Y=\frac{2}{b}[\xi,Y]_{T{\mathcal F}_1}
\end{equation}
for all $Y\in\Gamma(T{\mathcal F}_2)$. Thus for any
$X\in\Gamma(T{\mathcal F}_1)$, $2h X=[\xi,\varphi
X]-\varphi[\xi,X]=[\xi,\varphi X]_{T{\mathcal F}_1}+[\xi,\varphi
X]_{T{\mathcal F}_2}-\varphi([\xi,X]_{T{\mathcal
F}_1})-\varphi([\xi,X]_{T{\mathcal F}_2})$, from which, as
$h(T{\mathcal F}_1)\subset T{\mathcal F}_1$, it follows that $2 h X
- [\xi,\varphi X]_{T{\mathcal F}_1} + \varphi([\xi,X]_{T{\mathcal
F}_2}) =[\xi,\varphi X]_{T{\mathcal
F}_2}-\varphi([\xi,X]_{T{\mathcal F}_1})=0$. Hence, using
\eqref{formulaphi1}--\eqref{formulaphi2},
\begin{equation}\label{formulah1}
h X = \frac{1}{2}\left([\xi,\varphi X]_{T{\mathcal
F}_1}-\varphi([\xi,X]_{T{\mathcal
F}_2})\right)=\frac{1}{2}\left(-\frac{b}{2}+\frac{a}{2}\right)X=\frac{a-b}{4}X.
\end{equation}
In the same way one has, for any $Y\in\Gamma(T{\mathcal F}_2)$,
\begin{equation}\label{formulah2}
h Y = \frac{1}{2}\left([\xi,\varphi Y]_{T{\mathcal
F}_2}-\varphi([\xi,Y]_{T{\mathcal
F}_1})\right)=\frac{1}{2}\left(-\frac{a}{2}+\frac{b}{2}\right)Y=-\frac{a-b}{4}Y.
\end{equation}
We then distinguish the cases $a\neq b$ and $a=b$. In the first
case, assuming for instance $a>b$, the manifold is not
\emph{K}-contact and ${\mathcal F}_1$ and ${\mathcal F}_2$ are the
eigenspaces of the operator $h$ corresponding to the eigenvalues
$\lambda=\frac{a-b}{4}$ and $-\lambda$, respectively. Therefore
$\nabla^{bl}h=0$ and so $(M,\varphi,\xi,\eta,g)$ fulfils all the
conditions required by Theorem \ref{principale0} and we conclude
that it is a contact metric $(\kappa,\mu)$-manifold. Comparing
\eqref{costruzionemetrica} with
\eqref{invariante1}--\eqref{invariante2} we obtain the linear system
$2\lambda-\mu+2=a, \ -2\lambda-\mu+2=b$ which admits the unique
solution $\lambda=\frac{a-b}{4}$, $\mu=2-\frac{a+b}{2}$. Hence
$\kappa=1-\lambda^2=1-\frac{(a-b)^2}{16}$. Now we consider the case
$a=b$. By \eqref{formulah1}--\eqref{formulah2} we have that $h=0$.
Due to (iii) of Lemma \ref{lemmarocky} and using \eqref{torsione} we
have for all $X,X'\in\Gamma(T\mathcal F)$
\begin{align*}
(N_{\varphi}(X,X'))_{T{\mathcal F}_1}&=-[X,X']-(\varphi[\varphi
X,X'])_{T{\mathcal F}_1}-(\varphi[X,\varphi X'])_{T{\mathcal F}_1}\\
&=-[X,X']-\nabla^{bl}_{X'}X+\nabla^{bl}_{X}X'\\
&=T^{bl}(X,X')\\
&=-[X,X']_{T{\mathcal F}_2\oplus\mathbb{R}\xi}=0,
\end{align*}
because of the integrability of ${\mathcal F}_1$. Analogously,
$(N_{\varphi}(Y,Y'))_{T{\mathcal F}_2}=0$ for all
$Y,Y'\in\Gamma(T{\mathcal F}_2)$. Now, for all
$X,X'\in\Gamma(T{\mathcal F}_1)$,
\begin{align*}
N_{\varphi}(\varphi X,\varphi X')&=-[\varphi X,\varphi
X']+[\varphi^2 X,\varphi^2 X']-\varphi[\varphi^2 X,\varphi
X']-\varphi[\varphi X,\varphi^2 X']\\
&=-[\varphi X,\varphi X']+[X,X']+\varphi[X,\varphi
X']+\varphi[\varphi X,X']\\
&=-N_{\varphi}(X,X'),
\end{align*}
hence \ $(N_\varphi(X,X'))_{T{\mathcal F}_2}=-(N_\varphi(\varphi
X,\varphi X'))_{T{\mathcal F}_2}=0$. \ Since \ by \
\eqref{formulenijenhuis2} \
$g(N_{\varphi}(X,X'),\xi)=\eta(N_{\varphi}(X,X'))=0$,
$N_{\varphi}(X,X')$ has zero component also in the direction of
$\xi$ and we conclude that $N_{\varphi}(X,X')\equiv 0$. In the same
way one can show that $N_{\varphi}(Y,Y')\equiv 0$ for all
$Y,Y'\in\Gamma(T{\mathcal F}_2)$. Moreover,
\eqref{formulenijenhuis1} implies that $N_{\varphi}(X,Y)=0$ for all
$X\in\Gamma(T{\mathcal F}_1)$ and $Y\in\Gamma(T{\mathcal F}_2)$.
Finally, directly by the definition of $N_\varphi$ we have
$\eta(N_{\varphi}(Z,\xi))=0$ for all $Z\in\Gamma({\mathcal D})$, and
from \eqref{formulenijenhuis1} it follows  that $\varphi
N_\varphi(Z,\xi)=0$. Hence
$N_\varphi(Z,\xi)\in\ker(\eta)\cap\ker(\varphi)=\{0\}$. Thus the
tensor field $N_{\varphi}$ vanishes identically and
$(M,\varphi,\xi,\eta,g)$ is a Sasakian manifold.
\end{proof}

The expressions of $\kappa$ and $\mu$ in \eqref{costanti0} should be
compared with the example presented  by Boeckx in his local
classification of non-Sasakian contact metric
$(\kappa,\mu)$-manifolds with $I_M\leq -1$ (cf. $\S$ 4 of
\cite{Boeckx-00}). Therefore in some sense Theorem \ref{legendre1}
may be regarded also as a generalization of the Boeckx construction
for every value of the invariant $I_M$.

Furthermore, it should be remarked that the cases (I), (II) and
(III) of Theorem \ref{legendre1} correspond, respectively, to the
classes (I), (II) and (III) of Theorem \ref{classificazione}. This
is also clear by the computation of the invariant $I_M$. Indeed by
\eqref{costanti0} we get straightforwardly $I_M=\frac{a+b}{|a-b|}$,
so that, according to the signs of $a$ and $b$, $I_M$ can assume
values strictly greater than $1$, strictly lower than $-1$, or in
the interval $(-1,1)$. However an easy computation shows that
$I_M=\pm 1$ if and only if $a=0$ or $b=0$, that's impossible because
of the assumptions of Theorem \ref{legendre1}. Now we complete our
results by proving the following theorem concerning the remaining
classes (IV) and (V) of Theorem \ref{classificazione}.

\begin{theorem}\label{legendre2}
Let $(M,\eta)$ be a contact manifold endowed with a  bi-Legendrian
structure $({\mathcal F}_1,{\mathcal F}_2)$ such that
$\nabla^{bl}\Pi_{{\mathcal F}_1}=0$ (respectively,
$\nabla^{bl}\Pi_{{\mathcal F}_2}=0$). Assume that  ${\mathcal F}_1$
is positive definite (respectively, flat) and ${\mathcal F}_2$ is
flat (respectively, negative definite). Then for each $0<c\leq4$
(respectively, $-4\leq c<0$)  $(M,\eta)$ admits a
 compatible contact metric $(\kappa,\mu)$-structure, whose
associated bi-Legendrian structure is $({\mathcal F}_1,{\mathcal
F}_2)$, where
\begin{equation}\label{costants}
\kappa=1-\frac{c^2}{16}, \  \ \ \mu=2\left(1-\frac{c}{4}\right).
\end{equation}
\end{theorem}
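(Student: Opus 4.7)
The plan is to adapt the construction in the proof of Theorem~\ref{legendre1} to the degenerate situation in which one of the Legendre foliations is flat. I describe the argument for case~(A) ($\mathcal F_1$ positive definite, $\mathcal F_2$ flat, $0<c\leq 4$); case~(B) is entirely analogous after interchanging roles and reversing the sign. The central new difficulty is that the symmetric prescription $g|_{T\mathcal F_2\times T\mathcal F_2}=\tfrac{1}{b}\Pi_{\mathcal F_2}$ of Theorem~\ref{legendre1} becomes meaningless when $\Pi_{\mathcal F_2}\equiv 0$, so the whole construction has to be driven by the surviving invariant $\Pi_{\mathcal F_1}$ and its Libermann operator $\Lambda_{\mathcal F_1}$.

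Concretely I would set
\begin{equation*}
g|_{T\mathcal F_1\times T\mathcal F_1}:=\tfrac{1}{c}\Pi_{\mathcal F_1},\qquad \varphi X:=\tfrac{2}{c}[\xi,X]_{T\mathcal F_2}\ \ (X\in\Gamma(T\mathcal F_1)),\qquad \varphi Y:=-c\,\Lambda_{\mathcal F_1}Y\ \ (Y\in\Gamma(T\mathcal F_2)),
\end{equation*}
together with $\varphi\xi:=0$, then define the metric on $T\mathcal F_2$ by $g(Y,Y'):=g(\varphi Y,\varphi Y')$ (so that $\varphi$ is a $g$-isometry from $T\mathcal F_2$ onto $T\mathcal F_1$), declare $T\mathcal F_1,T\mathcal F_2,\mathbb R\xi$ mutually $g$-orthogonal, and put $g(\xi,\xi):=1$. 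The identity $\Lambda_{\mathcal F_1}[\xi,X]=\tfrac12 X$ together with $\ker\Lambda_{\mathcal F_1}=T\mathcal F_1\oplus\mathbb R\xi$ forces $\varphi^2=-I+\eta\otimes\xi$, and the compatibility $d\eta=g(\cdot,\varphi\cdot)$ reduces, via the symmetry of $\Pi_{\mathcal F_1}$ and the Legendre property, to the elementary identity $\Pi_{\mathcal F_1}(X,X')=-2\,d\eta(X,[\xi,X']_{T\mathcal F_2})$.

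The main technical step, and the most delicate part of the proof, is the identification of $h$. Because flatness of $\mathcal F_2$ forces $[\xi,Y]\in\Gamma(T\mathcal F_2)$, the symmetric expression $\varphi Y=\tfrac{2}{b}[\xi,Y]_{T\mathcal F_1}$ used in Theorem~\ref{legendre1} collapses and one has to proceed asymmetrically. I would first differentiate the defining relation $\Pi_{\mathcal F_1}(\Lambda_{\mathcal F_1}Z,X)=d\eta(Z,X)$ along $\nabla^{bl}$ and use $\nabla^{bl}\Pi_{\mathcal F_1}=0$ together with $\nabla^{bl}d\eta=0$ to obtain $\nabla^{bl}\Lambda_{\mathcal F_1}=0$; the explicit expressions for $\varphi$ then give $\nabla^{bl}\varphi=0$, and then $\nabla^{bl}g=0$ by the $\varphi$-based definition of $g|_{T\mathcal F_2}$. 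Lemma~\ref{lemmarocky} yields $h(T\mathcal F_1)\subset T\mathcal F_1$; expanding $2hX=[\xi,\varphi X]-\varphi[\xi,X]$, noting that $[\xi,\varphi X]\in T\mathcal F_2$ by flatness and that $\varphi([\xi,X]_{T\mathcal F_2})=-c\,\Lambda_{\mathcal F_1}([\xi,X]_{T\mathcal F_2})=-\tfrac{c}{2}X$, the projection onto $T\mathcal F_1$ gives $hX=\tfrac{c}{4}X$, and the anticommutation $h\varphi+\varphi h=0$ then forces $hY=-\tfrac{c}{4}Y$ on $T\mathcal F_2$.

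Finally, I would verify each clause of Theorem~\ref{principale0}. Items (i) and (iii) are built into $\nabla^{bl}$; for (ii), $\nabla^{bl}\eta=0$ and $\nabla^{bl}d\eta=0$ are automatic, $\nabla^{bl}g=0$ and $\nabla^{bl}\varphi=0$ have been established above, and $\nabla^{bl}h=0$ is immediate since $h$ acts as a constant scalar on each of the $\nabla^{bl}$-invariant subbundles $T\mathcal F_i$. Theorem~\ref{principale0} then certifies $(M,\varphi,\xi,\eta,g)$ as a contact metric $(\kappa,\mu)$-manifold whose associated bi-Legendrian structure is $(\mathcal F_1,\mathcal F_2)$. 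Comparing $\Pi_{\mathcal F_1}=c\,g|_{T\mathcal F_1}$ and $\Pi_{\mathcal F_2}=0$ with \eqref{invariante1}--\eqref{invariante2} yields the linear system $2\lambda-\mu+2=c$, $-2\lambda-\mu+2=0$, whose unique solution reproduces precisely \eqref{costants}.
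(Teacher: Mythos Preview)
Your proposal is correct and follows essentially the same route as the paper's own proof. The only differences are cosmetic: the paper writes $\varphi|_{T\mathcal F_1}=\tfrac{1}{c}(\Lambda_{\mathcal F_1}|_{T\mathcal F_2})^{-1}$ and $g|_{T\mathcal F_2\times T\mathcal F_2}=c\,\overline{\Pi}_{\mathcal F_1}$, which by \eqref{proplambda} coincide with your bracket-based and $\varphi$-pullback definitions; and the paper deduces $\nabla^{bl}g=0$ first and then invokes Lemma~\ref{lemmarocky} for $\nabla^{bl}\varphi=0$, whereas you go through $\nabla^{bl}\Lambda_{\mathcal F_1}=0\Rightarrow\nabla^{bl}\varphi=0\Rightarrow\nabla^{bl}g=0$ --- both orderings are valid and rest on the same ingredients.
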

\begin{proof}
Let us assume that  ${\mathcal F}_1$ is positive definite
 and ${\mathcal F}_2$ is flat. Then ${\mathcal F}_1$ is, in particular, non-degenerate and we
can consider the corresponding linear map $\Lambda_{{\mathcal
F}_1}:TM\longrightarrow T{\mathcal F}_1$ defined by
\eqref{lambda}. \ Since \ the \ operator \ $\Lambda_{{\mathcal
F}_1}$ \ is \ surjective \ and \ its \ kernel \  is \ $T{\mathcal
F}_1\oplus\mathbb{R}\xi$, \ we \ have  \ that $\Lambda_{{\mathcal
F}_1}|_{T{\mathcal F}_2}:T{\mathcal F}_2\longrightarrow T{\mathcal
F}_1$ is an isomorphism. Then for each $c\in (0,4]$ we define a
tensor field $\varphi$ of type $(1,1)$ by
\begin{equation}\label{strutture}
\varphi|_{T{\mathcal F}_1}:=\frac{1}{c}(\Lambda_{{\mathcal
F}_1}|_{T{\mathcal F}_2})^{-1}, \ \ \varphi|_{T{\mathcal F}_2}:=-c
\Lambda_{{\mathcal F}_1}|_{T{\mathcal F}_2}, \ \ \varphi\xi=0.
\end{equation}
Moreover we put
\begin{equation}\label{definizionemetrica1}
g|_{T{\mathcal F}_1\times T{\mathcal
F}_1}:=\frac{1}{c}\Pi_{{\mathcal F}_1}, \ \ g|_{T{\mathcal
F}_2\times T{\mathcal F}_2}:=c \overline{\Pi}_{{\mathcal
F}_1}|_{{T{\mathcal F}_2}\times{T{\mathcal F}_2}}, \ \
g:=\eta\otimes\eta \ \textrm{elsewhere}.
\end{equation}
Notice that $g$ defines a Riemannian metric since, by assumption,
${\mathcal F}_1$ is positive definite and $c>0$. We prove that in
fact $(\varphi,\xi,\eta,g)$ is a contact metric structure. Indeed we
have easily that $\varphi^2=-I+\eta\otimes\xi$. Next, \ for all \
$X,X'\in\Gamma(T{\mathcal F}_1)$ we have \ $g(\varphi X,\varphi
X')=c\Pi_{{\mathcal F}_1}(\Lambda_{{\mathcal F}_1}\varphi
X,\Lambda_{{\mathcal F}_1}\varphi X')=\frac{1}{c}\Pi_{{\mathcal
F}_1}(\Lambda_{{\mathcal F}_1} \Lambda_{{\mathcal F}_1}^{-1}
X,\Lambda_{{\mathcal F}_1} \Lambda_{{\mathcal F}_1}^{-1}
X')=\frac{1}{c}\Pi_{{\mathcal F}_1}(X,X')=g(X,X')$. In a similar way
one can prove that $g(\varphi Y,\varphi Y')=g(Y,Y')$ for all
$Y,Y'\in\Gamma(T{\mathcal F}_2)$. Moreover, the same arguments used
in the proof of Theorem \ref{legendre1} show that $g$ is an
associated metric, that is $d\eta=g(\cdot,\varphi\cdot)$. Thus
$(\varphi,\xi,\eta,g)$ is a contact metric structure. Notice that,
by construction, ${\mathcal F}_1$ and ${\mathcal F}_2$ are conjugate
Legendre foliations with respect to $(\varphi,\xi,\eta,g)$. Finally,
the definition of $g$ and the assumption $\nabla^{bl}\Pi_{{\mathcal
F}_1}=0$ imply that the bi-Legendrian connection is metric with
respect to $g$. Hence, by Lemma \ref{lemmarocky}, the tensor field
$\varphi$ is $\nabla^{bl}$-parallel and the operator $h$ preserves
the Legendre foliations ${\mathcal F}_1$ and ${\mathcal F}_2$. We
are now able to compute the explicit expression of $h$. For any
$X\in\Gamma(T{\mathcal F}_1)$ we have $2hX=[\xi,\varphi
X]-\varphi[\xi,X]=[\xi,\varphi X]_{T{\mathcal F}_1}+[\xi,\varphi
X]_{T{\mathcal F}_2}-\varphi([\xi,X]_{T{\mathcal
F}_1})-\varphi([\xi,X]_{T{\mathcal F}_2})$. The flatness of
${\mathcal F}_2$ yields $[\xi,\varphi X]_{T{\mathcal F}_1}=0$. Thus
\begin{equation}\label{equazione1}
2hX+\varphi([\xi,X]_{T{\mathcal F}_2})=[\xi,\varphi X]_{T{\mathcal
F}_2}-\varphi([\xi,X]_{T{\mathcal F}_1}).
\end{equation}
Since $h$ preserves the foliations, the right hand side of
\eqref{equazione1} is a section of both $T{\mathcal F}_1$ and
$T{\mathcal F}_2$, hence vanishes. Consequently, taking into account
that $\ker(\Lambda_{{\mathcal F}_1})=T{\mathcal
F}_1\oplus\mathbb{R}\xi$,
\begin{equation}\label{equazione2}
hX=-\frac{1}{2}\varphi([\xi,X]_{T{\mathcal
F}_2})=\frac{c}{2}\Lambda_{{\mathcal F}_1}([\xi,X]_{T{\mathcal
F}_2})=\frac{c}{2}\Lambda_{{\mathcal F}_1}([\xi,X])=\frac{c}{4}X.
\end{equation}
Moreover, let $Y$ be a section of $T{{\mathcal F}_2}$. As
$\varphi(T{\mathcal F}_1)=T{{\mathcal F}_2}$, $Y=\varphi X$ for some
$X\in\Gamma(T{\mathcal F}_1)$. Then, by \eqref{equazione2},
$hY=h\varphi X=-\varphi h X=-\frac{c}{4}\varphi X=-\frac{c}{4}Y$.
Thus the bi-Legendrian structure $({\mathcal F}_1,{\mathcal F}_2)$
coincides with that one determined by the eigendistributions of the
operator $h$. In particular this implies that $\nabla^{bl}h=0$.
Therefore all the conditions in Theorem \ref{principale0} are
verified and we conclude that $(M,\varphi,\xi,\eta,g)$ is a contact
metric $(\kappa,\mu)$-manifold such that ${\mathcal
D}(\lambda)={\mathcal F}_1$ and ${\mathcal D}(-\lambda)={\mathcal
F}_2$. Finally, comparing \eqref{definizionemetrica1} with
\eqref{invariante1} and taking into account that $\Pi_{{\mathcal
F}_2}=0$, we get $\kappa=1-\left(\frac{c}{4}\right)^2$ and
$\mu=2\left(1-\frac{c}{4}\right)$. The case when ${\mathcal F}_1$ is
flat and ${\mathcal F}_2$ is negative definite is analogous, the
only difference being to use $\Lambda_{{\mathcal F}_2}$ setting
\begin{gather*}
\varphi|_{T{\mathcal F}_1}:=-c\Lambda_{{\mathcal F}_2}|_{T{\mathcal
F}_1}, \ \ \varphi|_{T{\mathcal F}_2}:=\frac{1}{c}(\Lambda_{{\mathcal F}_2}|_{T{\mathcal F}_1})^{-1}, \ \ \varphi\xi=0, \\
g|_{T{\mathcal F}_1\times T{\mathcal
F}_1}:=c\overline{\Pi}_{{\mathcal F}_2}|_{T{\mathcal F}_1\times
T{\mathcal F}_1}, \  \ g|_{T{\mathcal F}_2\times T{\mathcal
F}_2}:=\frac{1}{c}\Pi_{{\mathcal F}_2}, \ g:=\eta\otimes\eta \ \
\textrm{elsewhere}.
\end{gather*}
where $c\in[-4,0)$. Arguing as in the previous case, one can find
that $(\varphi,\xi,\eta,g)$ is a contact metric
$(\kappa,\mu)$-structure, where $\kappa$ and $\mu$ are given by
\eqref{costants} and ${\mathcal D}(\lambda)={\mathcal F}_1$,
${\mathcal D}(-\lambda)={\mathcal F}_2$
\end{proof}

\begin{remark}
Notice that, as expected, by \eqref{costants}, we get $I_M=1$ if
$c>0$ and $I_M=-1$ if $c<0$. Furthermore, it should be remarked that
for no value of $c$ one can obtain a Sasakian structure, since
$\kappa=1$ if and only if $c=0$. Whereas, for $c=4$ one gets
$\kappa=\mu=0$, that is $R_{X Y}\xi=0$ for all $X,Y\in\Gamma(TM)$.
Such contact metric manifolds were deeply studied by Blair in
\cite{blair-1}.
\end{remark}

\begin{corollary}\label{sasaki1}
Let $(M,\varphi,\xi,\eta,g)$ be a non-Sasakian contact metric
$(\kappa,\mu)$-manifold. Then
\begin{enumerate}
  \item[(i)] if $I_M\neq \pm 1$, $(M,\eta)$ admits a family of compatible
contact metric $(\kappa_{a,b},\mu_{a,b})$-structures, where $a$ and
$b$ are real numbers such that $ab=(2-\mu)^2-4(1-\kappa)$;
  \item [(ii)] if $I_M=1$ (respectively, $I_M=-1$), $(M,\eta)$ admits a family of compatible contact
metric $(\kappa_{c},\mu_{c})$-structures, where $0<c\leq 4$
(respectively, $-4\leq c<0$).
\end{enumerate}
Furthermore, the above contact metric $(\kappa_{a,b},\mu_{a,b})$ and
$(\kappa_{c},\mu_{c})$-structures are of the same classification as
$(M,\varphi,\xi,\eta,g)$.
\end{corollary}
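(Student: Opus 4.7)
The plan is to apply Theorem \ref{legendre1} to the canonical bi-Legendrian structure $({\mathcal D}(\lambda),{\mathcal D}(-\lambda))$ for part (i), and Theorem \ref{legendre2} for part (ii), using Theorem \ref{classificazione} to match the value of $I_M$ with the appropriate class: $|I_M|\neq 1$ corresponds to classes (I)--(III), while $I_M=\pm 1$ corresponds to classes (IV)--(V). First I would verify the parallelism hypothesis: by Theorem \ref{principale0} the bi-Legendrian connection satisfies $\nabla^{bl}g=0$ and preserves each eigendistribution of $h$, and since formulas \eqref{invariante1}--\eqref{invariante2} express $\Pi_{{\mathcal D}(\pm\lambda)}$ as a \emph{constant} multiple of $g$ on the corresponding leaf, the parallelism $\nabla^{bl}\Pi_{{\mathcal D}(\pm\lambda)}=0$ is automatic (and vacuous on the flat side in classes (IV)--(V)).

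For part (i) I set $a_0:=2\lambda+2-\mu$ and $b_0:=-2\lambda+2-\mu$, so that $\Pi_{{\mathcal D}(\lambda)}=a_0\,g$ and $\Pi_{{\mathcal D}(-\lambda)}=b_0\,g$ on the respective leaves, with $a_0 b_0=(2-\mu)^2-4(1-\kappa)$. A short computation based on \eqref{lambda}, the skew-symmetry of $d\eta$, the compatibility $d\eta(X,Y)=g(X,\varphi Y)$, and the fact that $\varphi$ interchanges ${\mathcal D}(\lambda)$ and ${\mathcal D}(-\lambda)$ yields $\Lambda_{{\mathcal D}(-\lambda)}X=-b_0^{-1}\varphi X$ for $X\in\Gamma({\mathcal D}(\lambda))$; hence $\overline\Pi_{{\mathcal D}(-\lambda)}=b_0^{-1}g$ on ${\mathcal D}(\lambda)$ and, symmetrically, $\overline\Pi_{{\mathcal D}(\lambda)}=a_0^{-1}g$ on ${\mathcal D}(-\lambda)$. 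Therefore both proportionality conditions of Theorem \ref{legendre1} hold with common constant $a_0 b_0$. Because the signs of $a_0,b_0$ coincide, in each of classes (I)--(III), with the signs of $a,b$ required by the matching case of Theorem \ref{legendre1}, any factorization $ab=(2-\mu)^2-4(1-\kappa)$ with compatible signs delivers, via Theorem \ref{legendre1}, a compatible contact metric $(\kappa_{a,b},\mu_{a,b})$-structure. This is the announced family.

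Part (ii) is then a direct application of Theorem \ref{legendre2}: in class (IV), ${\mathcal D}(\lambda)$ is positive definite and ${\mathcal D}(-\lambda)$ is flat, so for each $c\in(0,4]$ one obtains a compatible contact metric $(\kappa_c,\mu_c)$-structure, and analogously in class (V) for $c\in[-4,0)$. For the final clause, note that in both parts the contact form $\eta$ and Reeb field $\xi$ are unchanged, so the Pang invariants $\Pi_{{\mathcal F}_i}$ of the bi-Legendrian decomposition of the new structure coincide with those of the original; hence the new structure belongs to the same class of Theorem \ref{classificazione} as $(M,\varphi,\xi,\eta,g)$. The only nontrivial step is the identification of $\Lambda_{{\mathcal D}(-\lambda)}$, which reduces to a one-line calculation; everything else is bookkeeping of signs against Theorems \ref{legendre1}, \ref{legendre2}, and \ref{principale0}.
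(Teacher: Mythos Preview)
Your proof is correct and follows essentially the same approach as the paper: verify $\nabla^{bl}\Pi_{{\mathcal D}(\pm\lambda)}=0$ via Theorem~\ref{principale0} and \eqref{invariante1}--\eqref{invariante2}, compute the Libermann operators (and hence $\overline\Pi$) in terms of $\varphi$ to obtain the proportionality constant $(2-\mu)^2-4(1-\kappa)$, then apply Theorem~\ref{legendre1} or~\ref{legendre2} according to the class dictated by Theorem~\ref{classificazione}. Your justification of the ``same classification'' clause via the invariance of the Pang forms under change of compatible metric is actually more explicit than the paper's, which leaves this point implicit.
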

\begin{proof}
In order to prove the statements, it suffices to show that
$(M,\varphi,\xi,\eta,g)$ verifies all the hypotheses of Theorem
\ref{legendre1} for the case (i) and of Theorem \ref{legendre2} for
the case (ii).\\
(i) \ By \eqref{invariante1}--\eqref{invariante2} and by Theorem
\ref{principale0} we have immediately that
$\nabla^{bl}\Pi_{{\mathcal D}(\lambda)}=\nabla^{bl}\Pi_{{\mathcal
D}(-\lambda)}=0$, where, as usual, $\nabla^{bl}$ denotes the
bi-Legendrian connection associated to the bi-Legendrian structure
$({\mathcal D}(\lambda),{\mathcal D}(-\lambda))$ defined by the
eigendistributions of the operator $h$. Next, we compute the
explicit expression of the Libermann operators $\Lambda_{{\mathcal
D}(\lambda)}:TM\longrightarrow{\mathcal D}(\lambda)$ and
$\Lambda_{{\mathcal D}(-\lambda)}:TM\longrightarrow{\mathcal
D}(-\lambda)$. For any $X\in\Gamma({\mathcal D}(\lambda))$ and
$Y\in\Gamma({\mathcal D}(-\lambda))$ we have, by
\eqref{invariante1},
\begin{equation*}
\Pi_{{\mathcal D}(\lambda)}(\Lambda_{{\mathcal
D}(\lambda)}Y,X)=d\eta(Y,X)=g(Y,\varphi
X)=-\frac{1}{2\sqrt{1-\kappa}-\mu+2}g(\varphi Y,X),
\end{equation*}
from which it follows that
\begin{equation}\label{invariante3}
\Lambda_{{\mathcal D}(\lambda)}=\left\{
                                  \begin{array}{ll}
                                    0, & \hbox{on ${\mathcal D}(\lambda)\oplus\mathbb{R}\xi$,} \\
                                    \frac{1}{\mu-2-2\sqrt{1-\kappa}}\varphi, & \hbox{on ${\mathcal D}(-\lambda)$.}
                                  \end{array}
                                \right.
\end{equation}
Whereas, using \eqref{invariante2}, one can find
\begin{equation}\label{invariante4}
\Lambda_{{\mathcal D}(-\lambda)}=\left\{
                                  \begin{array}{ll}
                                    \frac{1}{\mu-2+2\sqrt{1-\kappa}}\varphi, & \hbox{on ${\mathcal D}(\lambda)$,} \\
                                    0, & \hbox{on ${\mathcal D}(-\lambda)\oplus\mathbb{R}\xi$.}
                                  \end{array}
                                \right.
\end{equation}
Notice that the denominators in \eqref{invariante3} and
\eqref{invariante4} are different from zero just because of the
assumption $I_{M}\neq\pm 1$. Next, for all $X,X'\in\Gamma({\mathcal
D}(-\lambda))$,
\begin{gather*}
\overline{\Pi}_{{\mathcal D}(-\lambda)}(X,X')=\Pi_{{\mathcal
D}(-\lambda)}(\Lambda_{{\mathcal D}(-\lambda)}X,\Lambda_{{\mathcal
D}(-\lambda)}X')=\frac{1}{-2\sqrt{1-\kappa}-\mu+2}g(\varphi
X,\varphi X')\\
=\frac{1}{-2\sqrt{1-\kappa}-\mu+2}g(X,X')=\frac{1}{(2-\mu)^2-4(1-\kappa)}\Pi_{{\mathcal
D}(\lambda)}(X,X').
\end{gather*}
Thus $\overline{\Pi}_{{\mathcal
D}(\lambda)}=((2-\mu)^2-4(1-\kappa))\overline{\Pi}_{{\mathcal
D}(-\lambda)}$ on ${\mathcal D}(\lambda)$ and in a similar manner
one can find that $\overline{\Pi}_{{\mathcal
D}(-\lambda)}=((2-\mu)^2-4(1-\kappa))\overline{\Pi}_{{\mathcal
D}(\lambda)}$ on ${\mathcal D}(-\lambda)$. We distinguish the cases
(I) $I_M>1$, (II) $-1<I_M<1$ and (III) $I_M<-1$. By Theorem
\ref{classificazione}, in the first case both ${\mathcal
D}(\lambda)$ and ${\mathcal D}(-\lambda)$ are positive definite, in
the second ${\mathcal D}(\lambda)$ is positive definite and
${\mathcal D}(-\lambda)$ negative definite and in the third one both
${\mathcal D}(\lambda)$ and ${\mathcal D}(-\lambda)$ are negative
definite. Then we take any two $a,b\in\mathbb{R}$ such that
$ab=(2-\mu)^2-4(1-\kappa)$ and $a>0$, $b>0$ in the case (I), $a>0$,
$b<0$ in the case (II) and $a<0$, $b<0$ in the case (III). Thus in
any case the hypotheses of Theorem \ref{legendre1} are verified and
so the structure $(\varphi_{a,b},\xi,\eta,g_{a,b})$ defined by
\eqref{costruzionemetrica} and \eqref{struttura} is a
contact metric $(\kappa_{a,b},\mu_{a,b})$-structure on $(M,\eta)$.\\
(ii) \ If $I_M=1$ then, by Theorem \ref{classificazione}, ${\mathcal
D}(\lambda)$ is positive definite and ${\mathcal D}(-\lambda)$ is
flat. Moreover, again by \eqref{invariante1} and Theorem
\ref{principale0} we have that $\nabla^{bl}\Pi_{{\mathcal
D}(\lambda)}=0$. Thus all the assumptions of Theorem \ref{legendre2}
are satisfied and it suffices to take any $c\in (0,4]$ for obtaining
a contact metric $(\kappa_{c},\mu_{c})$-structure given by
\eqref{strutture} and \eqref{definizionemetrica1}.
\end{proof}

\begin{corollary}\label{sasaki2}
Any contact metric $(\kappa,\mu)$-manifold $(M,\varphi,\xi,\eta,g)$
such that $|I_M|>1$ admits a compatible Sasakian structure.
\end{corollary}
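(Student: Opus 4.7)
The plan is to deduce this corollary directly from Corollary \ref{sasaki1}(i) combined with the special case $a=b$ of Theorem \ref{legendre1}. Since the invariant $I_M$ is defined only for non-Sasakian contact metric $(\kappa,\mu)$-manifolds, we may assume $(M,\varphi,\xi,\eta,g)$ is non-Sasakian, so that $\kappa<1$ and the bi-Legendrian structure $({\mathcal D}(\lambda),{\mathcal D}(-\lambda))$ of Theorem \ref{teoremagreci} is available.

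First, I would use Theorem \ref{classificazione} together with the hypothesis $|I_M|>1$ to place $M$ in class (I) (if $I_M>1$) or in class (III) (if $I_M<-1$). This tells us that ${\mathcal D}(\lambda)$ and ${\mathcal D}(-\lambda)$ are \emph{either both positive definite or both negative definite}, which is precisely the sign configuration needed to apply Theorem \ref{legendre1}(i) with $a=b$ of the appropriate sign.

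The central computation is to rewrite the constraint $ab=(2-\mu)^2-4(1-\kappa)$ appearing in Corollary \ref{sasaki1}(i) in terms of $I_M$. Using the definition $I_M=(2-\mu)/(2\sqrt{1-\kappa})$ one obtains
\begin{equation*}
(2-\mu)^2-4(1-\kappa)=4(1-\kappa)\bigl(I_M^2-1\bigr).
\end{equation*}
Since $M$ is non-Sasakian we have $1-\kappa>0$, and the hypothesis $|I_M|>1$ gives $I_M^2-1>0$. Therefore the right-hand side is strictly positive, and we may legitimately choose
\begin{equation*}
a=b=\varepsilon\cdot 2\sqrt{(1-\kappa)(I_M^2-1)},
\end{equation*}
with $\varepsilon=+1$ in case (I) and $\varepsilon=-1$ in case (III); this choice satisfies $ab=(2-\mu)^2-4(1-\kappa)$ and has the sign required by Theorem \ref{legendre1}(I) or (III), respectively.

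With this choice, all hypotheses of Theorem \ref{legendre1} are met by $(M,\varphi,\xi,\eta,g)$ (the $\nabla^{bl}$-parallelism of $\Pi_{{\mathcal D}(\pm\lambda)}$ and the proportionality of $\overline{\Pi}_{{\mathcal D}(\lambda)}$ and $\overline{\Pi}_{{\mathcal D}(-\lambda)}$ were already verified in the proof of Corollary \ref{sasaki1}(i)), and because $a=b$ we are in case (i) of that theorem. Hence the contact metric structure produced by the construction \eqref{costruzionemetrica}--\eqref{struttura} is Sasakian, and it is by design compatible with the given contact form $\eta$. The only real step requiring care is the sign analysis of $(2-\mu)^2-4(1-\kappa)$, which as shown above is an immediate consequence of the identity $(2-\mu)^2=4(1-\kappa)I_M^2$; once positivity is established, the remainder of the argument is a direct invocation of the preceding theorems.
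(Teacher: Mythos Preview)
Your proposal is correct and follows essentially the same route as the paper: reduce to the non-Sasakian case, invoke Theorem \ref{classificazione} to see that $|I_M|>1$ forces both eigenfoliations to be positive (or both negative) definite, and then choose $a=b$ of the appropriate sign satisfying $ab=(2-\mu)^2-4(1-\kappa)$ so that Theorem \ref{legendre1}(i), via the verifications carried out in Corollary \ref{sasaki1}, yields a compatible Sasakian structure. Your rewriting $(2-\mu)^2-4(1-\kappa)=4(1-\kappa)(I_M^2-1)$ makes the required positivity transparent, but otherwise the argument is the same as the paper's.
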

\begin{proof}
If $(M,\varphi,\xi,\eta,g)$ is Sasakian then the assertion is
trivial, so we can assume that the structure $(\varphi,\xi,\eta,g)$
is non-Sasakian. Then we can apply Corollary \ref{sasaki1}. The
assumption $|I_M|>1$ implies by Theorem \ref{classificazione} that
the Legendre foliations ${\mathcal D}(\lambda)$ and ${\mathcal
D}(-\lambda)$ are either both positive definite or both negative
definite. So it is sufficient to take
$a=b=\sqrt{\left(1-\frac{\mu}{2}\right)^2-(1-\kappa)}$ in the case
of positive definiteness and
$a=b=-\sqrt{\left(1-\frac{\mu}{2}\right)^2-(1-\kappa)}$ in the case
of negative definiteness, for obtaining, by Theorem \ref{legendre1},
a Sasakian structure on $M$ compatible with the contact form $\eta$.
\end{proof}

\begin{corollary}
For each $1\leq p\leq 2n$, the $p$-th Betti number of a compact
contact metric $(\kappa,\mu)$-manifold $M^{2n+1}$ such that
$|I_{M^{2n+1}}|>1$, is even.
\end{corollary}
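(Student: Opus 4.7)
The plan is to reduce the statement to the known parity constraints on Betti numbers of compact Sasakian manifolds, with Corollary \ref{sasaki2} carrying all of the geometric content.

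First, I would invoke Corollary \ref{sasaki2}: the hypothesis $|I_{M^{2n+1}}|>1$ ensures that the underlying contact manifold $(M^{2n+1},\eta)$ admits a compatible Sasakian structure $(\varphi',\xi,\eta,g')$. Crucially, this new Sasakian structure is placed on the \emph{same} smooth manifold $M^{2n+1}$, which is compact by hypothesis. Thus, stripping away the specific tensor fields, we have produced on the compact manifold $M^{2n+1}$ the structure of a compact Sasakian manifold of dimension $2n+1$.

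Next, I would appeal to the classical topological obstruction for compact Sasakian manifolds (due to Fujitani, see e.g.\ Blair's monograph \emph{Riemannian Geometry of Contact and Symplectic Manifolds}): on a compact Sasakian manifold of dimension $2n+1$, the Betti numbers $b_p$ for $1\le p\le 2n$ satisfy the required parity condition, namely they are all even. Since Betti numbers are topological invariants of $M^{2n+1}$ and do not depend on the particular compatible contact metric structure chosen, this parity constraint transfers without change from the auxiliary structure $(\varphi',\xi,\eta,g')$ back to the original contact metric $(\kappa,\mu)$-structure, yielding the claim.

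There is no real obstacle here beyond correctly citing the Sasakian Betti number theorem: the entire geometric difficulty has already been absorbed into Corollary \ref{sasaki2}, and what remains is the purely topological observation that Betti numbers are insensitive to the choice of compatible geometric structure. The present corollary therefore stands as a formal consequence of Corollary \ref{sasaki2} combined with the classical Sasakian result.
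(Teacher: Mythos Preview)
Your proposal is correct and follows exactly the paper's own argument: invoke Corollary \ref{sasaki2} to obtain a compatible Sasakian structure on the same compact manifold, then cite the classical Betti number results for compact Sasakian manifolds (the paper points to \cite{blair-goldberg} and \cite{fujitani}). There is nothing to add.
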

\begin{proof}
The assertion is a consequence of Corollary \ref{sasaki2} and the
results in \cite{blair-goldberg} and \cite{fujitani}.
\end{proof}

Finally, applying twice Theorem \ref{legendre1} and  Corollary
\ref{sasaki2} we get the following result.

\begin{corollary}
Let $(M,\eta)$ be a contact manifold endowed with two positive
definite or  negative definite Legendre foliations satisfying the
conditions (I) or (III) of Theorem \ref{legendre1}, respectively.
Then $(M,\eta)$ admits a compatible Sasakian structure.
\end{corollary}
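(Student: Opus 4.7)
The plan is to apply Theorem~\ref{legendre1}(ii) once to build an auxiliary contact metric $(\kappa,\mu)$-structure on $(M,\eta)$, and then to invoke Corollary~\ref{sasaki2} (whose own proof rests on a second use of Theorem~\ref{legendre1}, this time in part~(i)) to replace it by a Sasakian one.

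More precisely, the hypothesis provides positive (respectively, negative) real numbers $a,b$ such that $\overline{\Pi}_{\mathcal{F}_1}=ab\,\overline{\Pi}_{\mathcal{F}_2}$ on $T\mathcal{F}_1$ and symmetrically on $T\mathcal{F}_2$. The bilinear data fix only the product $ab$, so I may always choose such a pair with $a\neq b$ (for instance by rescaling one factor). With this choice, Theorem~\ref{legendre1}(ii) supplies a compatible contact metric $(\kappa,\mu)$-structure $(\varphi,\xi,\eta,g)$ on $(M,\eta)$ whose parameters are given by \eqref{costanti0}.

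The next step is to evaluate the Boeckx invariant of this auxiliary structure. From \eqref{costanti0} one reads off $\sqrt{1-\kappa}=|a-b|/4$ and $1-\mu/2=(a+b)/4$, whence
$$I_M \;=\; \frac{1-\mu/2}{\sqrt{1-\kappa}} \;=\; \frac{a+b}{|a-b|}.$$
In case~(I), $a,b>0$ with $a\neq b$ gives $a+b>|a-b|>0$, so $I_M>1$; in case~(III), $a,b<0$ with $a\neq b$ gives $a+b<0$ and $|a+b|>|a-b|$, so $I_M<-1$. In either situation $|I_M|>1$, and Corollary~\ref{sasaki2} applied to $(M,\varphi,\xi,\eta,g)$ yields a Sasakian structure compatible with the contact form $\eta$.

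The entire argument is a straightforward chaining of the previous results, and the only point requiring verification is the strict inequality $|I_M|>1$, which follows at once from the sign conditions on $a$ and $b$ once they are selected distinct. In particular there is no genuine obstacle; the role of the corollary is mainly to single out and package this consequence of Theorem~\ref{legendre1} and Corollary~\ref{sasaki2} together.
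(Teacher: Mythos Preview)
Your argument is correct and matches the paper's indicated proof: the paper states this corollary as obtained by ``applying twice Theorem~\ref{legendre1} and Corollary~\ref{sasaki2}'', which is exactly your chain (Theorem~\ref{legendre1}(ii) to build an auxiliary $(\kappa,\mu)$-structure with $|I_M|>1$, then Corollary~\ref{sasaki2}, whose proof invokes Theorem~\ref{legendre1} again). One could shortcut the whole thing by observing that the hypotheses fix only the product $ab>0$, so taking $a=b=\pm\sqrt{ab}$ and invoking Theorem~\ref{legendre1}(i) directly yields the Sasakian structure in a single step---but your route is the one the paper has in mind.
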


We conclude  by recalling the definition of Tanaka-Webster parallel
space, recently introduced by Boeckx and Cho (\cite{Boeckx-08}). A
contact metric manifold is a \emph{Tanaka-Webster space} if its
generalized Tanaka-Webster torsion tensor $\hat{T}$ and its
curvature tensor $\hat{R}$ satisfy $\hat{\nabla} \hat{T}=0$ and
$\hat{\nabla} \hat{R}=0$, that is the Tanaka-Webster connection
$\hat{\nabla}$ is  invariant by parallelism (in the sense of
\cite{kobayashi1}). Boeckx and Cho have proven that a contact metric
manifold $M$ is a Tanaka-Webster parallel space if and only if $M$
is a Sasakian locally $\varphi$-symmetric space or a non-Sasakian
$(\kappa,2)$-space (\cite[Theorem 12]{Boeckx-08}). Thus, in
particular, we deduce the following corollaries of Theorem
\ref{legendre1} and of Corollary \ref{sasaki1}.

\begin{corollary}
Any non-Sasakian contact metric $(\kappa,\mu)$-manifold
$(M,\varphi,\xi,\eta,g)$ such that $|I_M|<1$ admits a compatible
Tanaka-Webster parallel structure.
\end{corollary}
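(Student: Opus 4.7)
My plan is to reduce the statement to the Boeckx--Cho characterization of Tanaka-Webster parallel spaces cited just before the corollary: a contact metric manifold is Tanaka-Webster parallel if and only if it is either a Sasakian locally $\varphi$-symmetric space or a non-Sasakian $(\kappa,2)$-space. Since we already have Corollary \ref{sasaki1} at our disposal, producing a compatible $(\kappa,\mu)$-structure with $\mu=2$ will do the job.

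The key observation is that the parameters in Corollary \ref{sasaki1}(i) give $\mu_{a,b}=2-\frac{a+b}{2}$, so the choice $b=-a$ automatically yields $\mu_{a,b}=2$. First I would check that this choice is admissible under the compatibility constraint $ab=(2-\mu)^2-4(1-\kappa)$. With $b=-a$ this constraint becomes $-a^2=(2-\mu)^2-4(1-\kappa)$, and the hypothesis $|I_M|<1$, i.e.\ $\left(1-\frac{\mu}{2}\right)^2<1-\kappa$, is exactly the statement that the right-hand side is negative, so we may solve by setting
\begin{equation*}
a=\sqrt{4(1-\kappa)-(2-\mu)^2}>0,\qquad b=-a<0.
\end{equation*}
The signs $a>0$, $b<0$ match the case (II) of Theorem \ref{classificazione}, which is indeed the class corresponding to $-1<I_M<1$, so we really are inside the hypotheses of Corollary \ref{sasaki1}(i).

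Next I would invoke Corollary \ref{sasaki1}(i) with these particular values of $a,b$ to obtain a compatible contact metric $(\kappa_{a,b},\mu_{a,b})$-structure on $(M,\eta)$; by construction this is a $(\kappa_{a,b},2)$-structure. It remains to observe that this new structure is still non-Sasakian: indeed Corollary \ref{sasaki1} ensures the resulting structure belongs to the same Pang class as the original one, which is class (II) and in particular non-Sasakian. Finally, applying the Boeckx--Cho theorem \cite[Theorem 12]{Boeckx-08} to this non-Sasakian $(\kappa_{a,b},2)$-space yields that $(M,\varphi_{a,b},\xi,\eta,g_{a,b})$ is Tanaka-Webster parallel, which is the desired conclusion.

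There is no genuine obstacle here since the heavy lifting was already done in Theorem \ref{legendre1} and Corollary \ref{sasaki1}; the only thing to verify carefully is the sign of $4(1-\kappa)-(2-\mu)^2$, i.e.\ that the condition $|I_M|<1$ is precisely what allows $b=-a$ to be chosen consistently with the admissibility constraint on the pair $(a,b)$.
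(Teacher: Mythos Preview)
Your proposal is correct and follows essentially the same route as the paper: use the classification to place the manifold in class~(II), choose $a=-b$ so that $\mu_{a,b}=2$, and then apply the Boeckx--Cho characterization. In fact your expression $a=\sqrt{4(1-\kappa)-(2-\mu)^2}$ has the correct sign under the radical, whereas the paper's printed value $a=-b=\sqrt{(1-\mu/2)^2-(1-\kappa)}$ appears to be a typographical slip (the quantity under that root is negative when $|I_M|<1$).
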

\begin{proof}
The assumption $|I_M|<1$ implies by Theorem \ref{classificazione}
that the Legendre foliation ${\mathcal D}(\lambda)$ is positive
definite and ${\mathcal D}(-\lambda)$ is negative definite. \ So
it is sufficient to take
$a=-b=\sqrt{\left(1-\frac{\mu}{2}\right)^2-(1-\kappa)}$ for
obtaining, according to Theorem \ref{legendre1}, a compatible
contact metric $(\kappa_{a,b},\mu_{a,b})$-structure
$(\varphi_{a,b},\xi,\eta,g_{a,b}$ on $(M,\eta)$ such that
$\kappa=1-\frac{a^2}{4}$ and $\mu=2$. Thus, by applying the
aforementioned result by Boeckx and Cho, we conclude that
$(M,\varphi_{a,b},\xi,\eta,g_{a,b})$ is a Tanaka-Webster parallel
space.
\end{proof}

\begin{corollary}
Let $(M,\eta)$ be a contact manifold endowed with a positive
definite Legendre foliation ${\mathcal F}_1$ and negative definite
Legedre foliation ${\mathcal F}_1$ satisfying the condition (II)  of
Theorem \ref{legendre1}. Then $(M,\eta)$ admits a compatible
Tanaka-Webster parallel structure.
\end{corollary}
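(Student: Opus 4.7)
The plan is to apply Theorem \ref{legendre1}(ii) with a cleverly chosen pair of parameters $(a,b)$ that forces the resulting $\mu$ to equal $2$, and then to invoke the Boeckx--Cho classification of Tanaka--Webster parallel spaces, exactly as in the proof of the previous corollary.

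First I would observe that in the statement of condition (II) of Theorem \ref{legendre1} the witnesses $a>0$ and $b<0$ are not uniquely determined by the bi-Legendrian structure: only the sign pattern and the value of the product $ab$ are pinned down by the pair of invariants $\overline{\Pi}_{{\mathcal F}_1}$, $\overline{\Pi}_{{\mathcal F}_2}$. Hence, if some $(a_0,b_0)$ with $a_0>0$, $b_0<0$ realizes (II), then so does every pair $(a,b)$ with $a>0$, $b<0$ and $ab = a_0b_0$. In particular, setting $k := \sqrt{-a_0 b_0}>0$ and then $a := k$, $b := -k$, condition (II) continues to hold for this symmetric choice.

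Next I would feed this symmetric pair into Theorem \ref{legendre1}(ii). Since $\nabla^{bl}\Pi_{{\mathcal F}_1}=\nabla^{bl}\Pi_{{\mathcal F}_2}=0$ is part of the hypothesis and $a\neq b$, we obtain a compatible non-Sasakian contact metric $(\kappa_{a,b},\mu_{a,b})$-structure on $(M,\eta)$. The formulas \eqref{costanti0} specialize to
\[
\kappa_{a,b} = 1 - \frac{(a-b)^2}{16} = 1 - \frac{k^2}{4}, \qquad \mu_{a,b} = 2 - \frac{a+b}{2} = 2,
\]
so $(M,\varphi_{a,b},\xi,\eta,g_{a,b})$ is a non-Sasakian contact metric $(\kappa_{a,b},2)$-space.

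To finish, I would invoke the Boeckx--Cho theorem recalled just before the corollary, which states that every non-Sasakian contact metric $(\kappa,2)$-space is Tanaka--Webster parallel; the constructed structure is therefore a compatible Tanaka--Webster parallel structure on $(M,\eta)$. The main (and essentially only) idea is to exploit the scaling freedom in the parameter pair $(a,b)$ to symmetrize it around zero, thereby killing the $\tfrac{a+b}{2}$ contribution to $\mu_{a,b}$ and landing precisely in the class handled by Boeckx--Cho; no genuine obstacle arises beyond this observation.
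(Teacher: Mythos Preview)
Your proposal is correct and follows essentially the same approach as the paper: the paper leaves this corollary without explicit proof, but the intended argument---parallel to the proof of the preceding corollary---is precisely to exploit the freedom in the pair $(a,b)$ witnessing condition (II) of Theorem \ref{legendre1} by choosing $a=-b$, thereby forcing $\mu_{a,b}=2$ via \eqref{costanti0}, and then invoking the Boeckx--Cho characterization of Tanaka--Webster parallel spaces. Your observation that only the product $ab$ and the sign pattern are constrained by condition (II), so that one may symmetrize to $a=-b=\sqrt{-a_0b_0}$, is exactly the point.
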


\small

\end{document}